\documentclass[A4, 11pt]{article}
\usepackage{latexsym}
\usepackage{amsmath}
\usepackage{amsthm}
\usepackage{amssymb}
\usepackage{amsfonts}
\usepackage{esint}
\usepackage[all]{xy}
\usepackage{graphics,graphicx}
\usepackage{accents}
\usepackage[english]{babel}
\usepackage[utf8]{inputenc}
\usepackage[T1]{fontenc}
\usepackage{color}
\usepackage{tikz-cd}
\usepackage{hyperref}
\usepackage{enumitem}
\theoremstyle{definition}
\newtheorem{defin}{Definition}[section]
\theoremstyle{plain}
\newtheorem{theo}[defin]{Theorem}
\newtheorem{prop}[defin]{Proposition}
\newtheorem{cor}[defin]{Corollary}
\newtheorem{lem}[defin]{Lemma}
\theoremstyle{definition}
\newtheorem{rem}[defin]{Remark}
\newtheorem{ex}[defin]{Example}


\def\A{{\mathbf A}}

\def\N{{\mathbf N}} 
 
\def\Q{{\mathbf Q}}
\def\Z{{\mathbf Z}}

\def\F{{\mathbf F}}

\def \mA { \mathcal{A}}

\def\Ker{ \operatorname{Ker} }
\def\Im{ \operatorname{Im} }
\def \Spec { \operatorname{Spec} }

\def \Hom {\operatorname{Hom} }

\def \Card {\operatorname{Card} }
\def \Gal {\operatorname{Gal} }
\def \lcm {\operatorname{lcm} }
\def \Aut {\operatorname{Aut} }
\author{Séverin Philip}
\begin{document}
\title{On the semi-stability degree for abelian varieties}
\maketitle

\begin{abstract}
For an abelian variety $A$ over a number field we study bounds depending only on the dimension of $A$ for the minimal degree $d(A)$ of a field extension over which $A$ acquires semi-stable reduction. We first compute $d(A)$ in terms of the cardinalities of the finite monodromy groups of $A$ which leads to a bound on $d(A)$ in terms of the classical Minkowski bound. We then show this bound is tight up to its $2$-part by considering $p$-adic coverings of the local points of a universal abelian scheme.
\end{abstract}

\section{Introduction}

In \cite{sga} exposé IX Grothendieck proves several seminal results related to the semi-stable reduction of abelian varieties among which the semi-stable reduction theorem. This theorem allows us to make the following definition.
\begin{defin} Let $g$ be a positive integer.
For an abelian variety $A$ over a number field $K,$ we set 
$$d(A)=\min\{ [L:K] \mid L/K \text{ finite,}~ A_L \text{ has semi-stable reduction}\}.$$
For a number field $K,$ we set
$$d_g(K)=\underset{B/K \text{ p.p. } \dim B=g}{\lcm} d(B)$$
 where p.p. stands for principally polarised.
And finally we set 
$$d_g=\underset{\dim B=g}{\sup}d(B),$$
the supremum being taken over all abelian varieties of dimension $g$ over some number field.

\end{defin}

We prove that the lowest common multiple $d_g(K)$ for any number field $K$ is realized as the value $d(A)$ of some abelian variety $A$ over a number field $L/K$.

\begin{theo}\label{maintheo}
For any number field $K$ and nonzero integer $g$ there is a finite extension $L/K$ with a principally polarised abelian variety $A$ of dimension $g$ over $L$ such that
$$d(A)=d_g(K).$$
\end{theo}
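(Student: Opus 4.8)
The plan is the following. Write $\Phi_v(A)$ for the finite local monodromy group of a principally polarised abelian variety $A$ at a finite place $v$ of its field of definition, so that by the computation of $d(A)$ recalled above one has $d(A)=\lcm_v|\Phi_v(A)|$, with $\Phi_v(A)$ trivial exactly when $A$ is semi-stable at $v$. Then $d_g(K)=\lcm\{\,|\Phi_v(B)|\,\}$, the lowest common multiple being taken over all principally polarised $B/K$ of dimension $g$ and all finite places $v$ of $K$; since every $d(B)$ is bounded by $d_g<\infty$ this is the lowest common multiple of a \emph{finite} set of bounded integers, so there are finitely many pairs $(B_1,v_1),\dots,(B_n,v_n)$ with $\lcm_{1\le i\le n}|\Phi_{v_i}(B_i)|=d_g(K)$. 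First I would replace $K$ by a finite extension in which each $v_i$ splits completely, re-choosing $v_i$ to be a place above the original one --- this leaves each completion $(B_i)_{v_i}$, hence each $\Phi_{v_i}(B_i)$, unchanged, so the displayed identity persists --- and thus assume $v_1,\dots,v_n$ pairwise distinct. It then suffices to produce one principally polarised abelian variety $A$ of dimension $g$ over a finite extension $L/K$ that is semi-stable at every finite place not lying above some $v_i$ and that at a place $w_i\mid v_i$ satisfies $\Phi_{w_i}(A)\cong\Phi_{v_i}(B_i)$; for then $d(A)=\lcm_i|\Phi_{w_i}(A)|=\lcm_i|\Phi_{v_i}(B_i)|=d_g(K)$.

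The bridge to the moduli space is a continuity principle I would establish first: the monodromy group is locally constant in the $p$-adic topology. Indeed, if $\mathcal{X}\to S$ is a family of principally polarised abelian varieties of dimension $g$ over a smooth variety $S$ over a number field and $\ell$ is invertible at a finite place $v$, then $\mathcal{X}[\ell^k]\to S$ is finite étale of degree prime to the residue characteristic of $v$; since a $v$-adic polydisc is prime-to-$p$ simply connected, this covering is trivial over a small enough $v$-adic ball $U\subseteq S(L_v)$, so the $G_{L_v}$-module $\mathcal{X}_s[\ell^k]$ --- hence, for $k$ large, the group $\Phi_v(\mathcal{X}_s)$ --- does not depend on $s\in U$. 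Consequently, realising $\Phi_{v_i}(B_i)$ at $w_i$ comes down to producing an abelian variety whose point in the moduli space $\mathcal{A}_g$ is $w_i$-adically close to that of $B_i$ (in the sense compatible with the twist data), while being integral, i.e.\ of good reduction, at all other places.

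Producing such a global abelian variety is the crux, and the intended mechanism is what the abstract calls "$p$-adic coverings of the local points of a universal abelian scheme": the tower of level coverings $\mathcal{A}_{g,N}\to\mathcal{A}_g$, which over $v$-adic points with $v\nmid N$ are covering maps of $p$-adic manifolds whose inertia monodromy over a point records, up to the automorphisms of the corresponding abelian variety, its local monodromy group. The plan is to show that any finite collection of local monodromy behaviours that occurs at all --- ours does, witnessed by $B_1,\dots,B_n$ --- can be realised simultaneously on a single abelian variety over a suitable finite extension $L/K$ with good reduction imposed everywhere else. In the favourable case where the groups $\Phi_{v_i}(B_i)$ all embed into $\Aut(A_0,\lambda_0)$ for one CM principally polarised abelian variety $(A_0,\lambda_0)/L$ of dimension $g$ with everywhere good reduction, one can take $A$ to be the twist of $A_0$ by a class in $H^1(L,\Aut(A_0,\lambda_0))$ that is unramified outside $\{w_1,\dots,w_n\}$ with prescribed local behaviour at each $w_i$ --- such a class existing, after a further enlargement of $L$, by class field theory (with a Grunwald--Wang type input); in general one instead chooses a point of $\mathcal{A}_g$ over $L$ lying in the prescribed $v_i$-adic neighbourhoods and integral elsewhere, using a toroidal compactification to control the reduction.

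The hard part will be exactly this global realisation with controlled reduction: it is here that passing to the extension $L$ is unavoidable --- to gain enough split places, to kill the twisting, Galois and stacky obstructions, and because $\mathcal{A}_g$ need not carry many $K$-rational points when $g$ is large --- and it is also here that the bookkeeping needed to place the possibly non-abelian local monodromy groups onto a single abelian variety must be carried out. The reduction to finitely many local conditions at distinct places, and the $p$-adic continuity of $\Phi_v$, are by contrast routine once the earlier computation of $d(A)$ is in hand.
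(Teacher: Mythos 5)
Your opening reduction (finitely many pairs $(B_i,v_i)$ with $\lcm_i\Card\Phi_{v_i}(B_i)=d_g(K)$) and the $v$-adic local constancy of the finite monodromy group in the fibres of an abelian scheme are indeed the right first steps, and they match the paper's architecture; note though that your justification of local constancy via a "prime-to-$p$ simply connected polydisc" is not a proof as stated --- what is needed (and true) is a Krasner-type statement that the $G_{K_v}$-set $\mathcal{X}_s[\ell]$ is locally constant in $s\in S(K_v)$, which the paper establishes carefully via a Galois closure of the $\ell$-torsion cover. The genuine gap is the crux you yourself flag: producing a global moduli point in the prescribed $v_i$-adic neighbourhoods. "Choosing a point of $\mathcal{A}_g$ over $L$ lying in the prescribed $v_i$-adic neighbourhoods and integral elsewhere" is a (strengthened, integral) weak approximation statement for the moduli space, and weak approximation for $H_g$ (or $\mathcal{A}_g$) is not known --- this is exactly the obstruction the paper must circumvent. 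Its solution, absent from your proposal, is to take a geometrically irreducible affine open $U\subset H_g$ containing the moduli points of the $B_i$, use Noether normalization to get a finite étale map $V\rightarrow Y\subset \A^k_K$ on a smaller open, and invoke Ekedahl's effective Hilbert irreducibility theorem: since $\A^k$ does satisfy weak approximation, one finds a rational point of $Y$ in the images of the prescribed opens whose fibre is irreducible, and that fibre yields $s\in H_g(L)$ with $L_{w_i}=K_{v_i}$ and $s_{L_{w_i}}\in U_i$ (this is the sole, controlled source of the extension $L$). Your alternative mechanism --- twisting one CM principally polarised $(A_0,\lambda_0)$ with everywhere good reduction by a class in $H^1(L,\Aut(A_0,\lambda_0))$ --- only works under the strong hypothesis that every $\Phi_{v_i}(B_i)$ embeds in the finite group $\Aut(A_0,\lambda_0)$, which fails in general (that group is typically tiny compared with the inertia groups allowed by Silverberg--Zarhin), and even then prescribing local inertial behaviour of a torsor under a possibly non-abelian finite group while staying unramified elsewhere is a nontrivial Grunwald-type problem you do not address.

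The second gap is your insistence on good reduction (or semi-stability) at all remaining places, which is both stronger than needed and not obtainable by any of the tools above; also, as written, "semi-stable at every finite place not lying above some $v_i$" still permits extra bad places above $v_i$ other than $w_i$, which would break your identity $d(A)=\lcm_i\Card\Phi_{w_i}(A)$. The paper dispenses with this requirement entirely: after producing $s$ with $\Phi_{A_s,w_i}\simeq\Phi_{B_i,v_i}$ one only has the divisibility $d_g(K)\mid d(A_s)$, and the surplus is then removed by a further finite extension $M/L$ built with weak approximation for number fields --- at each extra bad place $w$ one prescribes a local extension of degree $\Card\Phi_{A_s,w}$ (padded by an unramified factor to a common degree $d$) over which $A_s$ becomes semi-stable, while at the $w_i$ one prescribes unramified extensions of degree $d$, so the monodromy there is preserved; then $d((A_s)_M)=\lcm_i\Card\Phi_{B_i,v_i}=d_g(K)$. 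Without such a repair step (or a proof of your integrality claim), your argument yields only $d_g(K)\mid d(A)$, not the required equality.
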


In a previous work \cite{Ph2}, we built for every positive integer $g$ (using wild finite monodromy, see below) and for each odd prime $p$ a (CM) abelian variety $B_p$ of dimension $g$ over a number field $K_g$ such that the $p$-adic valuation of $d(B_p)$ is given by 
$$r(g,p)=\sum\limits_{i=0}^{\infty} \lfloor \frac{2g}{p^i(p-1)}\rfloor.$$
The numbers $r(g,p)$ together form the Minkowski bound
$$M(2g)=\prod\limits_p p^{r(g,p)}$$
which is also given by the lowest common multiple of the cardinalities of the finite subgroups of $\mathrm{GL}_{2g}(\Q)$. 

This leads to the following corollary.

\begin{cor}\label{maincor}
Let $g$ be a positive integer. Then there is an abelian variety $A$ of dimension $g$ over some number field such that 
$$\frac{M(2g)}{2^{g-1}} = d(A).$$
Furthermore we have the inequalities
$$\frac{M(2g)}{2^{g-1}}\leq d_g \leq M(2g).$$
\end{cor}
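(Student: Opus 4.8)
The plan is to derive both parts of the statement from Theorem~\ref{maintheo}, the construction of \cite{Ph2}, and the Minkowski-type bound on the finite monodromy groups obtained earlier. The upper bound $d_g\le M(2g)$ is the easy half: for an abelian variety $A$ of dimension $g$ over a number field, $d(A)$ is the lowest common multiple of the orders of the finite monodromy groups $\Phi_v$ at the finite places $v$, and each $\Phi_v$ embeds into $\mathrm{GL}_{2g}(\Q_\ell)$ for every prime $\ell$ distinct from the residue characteristic at $v$; since $M(2g)$ is the lowest common multiple of the orders of the finite subgroups of $\mathrm{GL}_{2g}(\Q)$, Minkowski's reduction argument gives $|\Phi_v|\mid M(2g)$, hence $d(A)\mid M(2g)$, and taking the supremum over all $A$ yields $d_g\le M(2g)$.

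For the equality it is enough to build a single abelian variety $A$ of dimension $g$ over a number field with $d(A)=M(2g)/2^{g-1}$, since the lower bound $d_g\ge M(2g)/2^{g-1}$ then follows from $d_g$ being a supremum. I would exhibit a number field $K$ with $d_g(K)=M(2g)/2^{g-1}$ and take for $A$ the principally polarised variety of dimension $g$ over a finite extension of $K$ furnished by Theorem~\ref{maintheo}, which has $d(A)=d_g(K)$. To compute $d_g(K)$, treat the odd part and the $2$-part separately. For each odd prime $p$ the CM variety $B_p/K_g$ of \cite{Ph2} satisfies $v_p(d(B_p))=r(g,p)$; since $d(\cdot)$ is an isogeny invariant one may replace $B_p$ by a principally polarised variety in its isogeny class (over a finite extension) and, passing to a common field $K\supseteq K_g$ by an extension whose local ramification at the relevant place over $p$ is prime to $p$ so that the wild $p$-monodromy is unaffected, obtain $v_p(d_g(K))\ge r(g,p)$; together with the Minkowski bound this forces $v_p(d_g(K))=r(g,p)$ for all odd $p$. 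For $p=2$ one runs the construction of the present paper—$2$-adic coverings of the local points of a universal abelian scheme—over a suitable further extension, still called $K$: this should produce a principally polarised $g$-dimensional abelian variety attaining $v_2(d)=r(g,2)-(g-1)$, together with the complementary inequality $v_2(d_g(K))\le r(g,2)-(g-1)$ read off the wild inertia at a place over $2$ of a principally polarised family. Then $d_g(K)=2^{\,r(g,2)-(g-1)}\prod_{p\text{ odd}}p^{\,r(g,p)}=M(2g)/2^{g-1}$, and Theorem~\ref{maintheo} delivers the desired $A$.

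The real work is concentrated in the $2$-part: showing that $r(g,2)-(g-1)$, rather than the full Minkowski exponent $r(g,2)$, is the correct $2$-adic valuation. Attaining it is exactly what the $2$-adic covering argument is for, and it is there that the factor $2^{g-1}$ is lost; bounding it from above amounts to controlling how large the image of wild inertia at a place over $2$ of a principally polarised $g$-dimensional abelian variety can be. The odd primes, by contrast, are essentially \cite{Ph2} plus the clerical task of carrying the valuations $r(g,p)$ through a base change and a change of polarisation. Finally, I would stress that the upper bound $d_g\le M(2g)$ need not be improvable to $M(2g)/2^{g-1}$, because $d_g$ ranges over all abelian varieties of dimension $g$ whereas Theorem~\ref{maintheo} and the $2$-part estimate only concern principally polarised ones, so the two bounds are permitted to differ by the stated power of $2$.
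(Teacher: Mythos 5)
Your overall skeleton matches the paper for the easy parts: the upper bound $d_g\le M(2g)$ is exactly equality~(\ref{lcm=d(A)}) combined with the divisibility $\Card\Phi_{A,v}\mid M(2g)$ (which the paper does not reprove via Minkowski's reduction argument but simply cites from Corollary 6.3 of \cite{Sb1998}), and the equality part is indeed obtained by applying Theorem~\ref{maintheo} to a number field $K$ with $d_g(K)=M(2g)/2^{g-1}$. The difference is that the paper gets this field $K$, with the \emph{exact} value of $d_g(K)$ including its $2$-part, directly from Theorem 1.1 of \cite{Ph2}; the whole content of the corollary beyond Theorem~\ref{maintheo} is that citation.

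Where your reconstruction of that input breaks down is the $2$-part, and this is a genuine gap. The $p$-adic covering machinery of this paper (Theorems~\ref{etanalcover}, \ref{monocover} and Lemma~\ref{approxeke}) is purely a globalisation device: it takes finite monodromy groups already realised by \emph{given} local principally polarised abelian varieties and produces one global variety realising them simultaneously. It cannot ``produce'' a principally polarised $g$-fold over a $2$-adic field attaining $v_2=r(g,2)-(g-1)$, and it certainly cannot yield the complementary upper bound $v_2(\Card\Phi_{A,v})\le r(g,2)-(g-1)$ for all principally polarised $A$ — that is a separate theorem about finite $2$-subgroups of inertia preserving a principal polarisation (subgroups of symplectic groups over $\ell$-adic fields), not something one ``reads off'' the family; both directions at $2$ are part of what \cite{Ph2} supplies. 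Without them you only get the divisibilities $M(2g)_{\mathrm{odd}}\mid d(A)\mid M(2g)$, not the stated equality $d(A)=M(2g)/2^{g-1}$. Your treatment of the odd primes (replacing the CM varieties $B_p$ by principally polarised representatives up to isogeny over extensions whose ramification is prime to $p$ at the relevant places, and assembling a common field) is plausible in outline but again is precisely the technical work packaged in \cite{Ph2}; as written it is a sketch of that reference rather than a proof. The correct fix is simply to invoke Theorem 1.1 of \cite{Ph2} for the existence of $K$ with $d_g(K)=M(2g)/2^{g-1}$ and then conclude with Theorem~\ref{maintheo}, as the paper does.
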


The main interest of the corollary is to give an almost sharp estimate for $d_g$. Note also that the upper bound given here is a strong improvement from the most commonly used one $d_g\leq \Card \mathrm{GL}_{2g}(\Z/12\Z)$ which can be deduced from Proposition 4.7 of \cite{sga} exposé IX. Indeed, this can already be seen on the first values 
$$\begin{array}{c|c|c|c|c}
g & 1 & 2 & 3 & 4   \\
M(2g) & 24 & 5760 & 2903040 & 1393459200  \\
\operatorname{Card} \mathrm{GL}_{2g} (\Z/12\Z) & 4608 & \simeq 3.2\times 10^{16} & \simeq 1.2\times 10^{38} & \simeq 1.9\times 10^{68} \\
\end{array}$$
but also from the asymptotics $\lim\limits_{n\rightarrow \infty} (\frac{M(n)}{n!})^{1/n}\simeq 3.4109$ (see \cite{Ka}) and $$\lim\limits_{n\rightarrow \infty}(\operatorname{Card} \mathrm{GL}_{n} (\Z/12\Z))^{1/n^2}=12.$$ 

We now briefly expose the content of this paper. We first relate $d(A)$ to the finite monodromy groups of $A$, where $A$ is an abelian variety over a number field $K$. These groups, which represent the local obstruction to semi-stable reduction, were first introduced by Serre in the case of elliptic curves in \cite{propgal} and generalized by Grothendieck to any dimension in \cite{sga}. For a non-archimedean place $v$ of $K$ we denote by $\Phi_{A,v}$ the finite monodromy group of $A$ at $v$. Let $\Sigma_K$ be the set of non-archimedean places of $K$, then we get the following formula
\begin{equation} \label{lcm=d(A)} d(A)=\underset{v\in \Sigma_K}{\lcm}\Card \Phi_{A,v}.
\end{equation}
This with the divisbility bound $\Card \Phi_{A,v} \mid M(2g)$ proved by Silverberg and Zarhin in \cite{Sb1998} yields the upper bound of Corollary \ref{maincor} through the divisibility relations
$$d(A)\mid M(2g) \text{ and } d_g\mid M(2g).$$

The relation between finite monodromy groups and $d(A)$ also opens a way to build abelian varieties with maximal $d(A)$ knowing only local data which is done by Theorem \ref{maintheo}. In order to achieve this we look at the geometric behavior of finite monodromy groups in famillies of abelian varieties. More precisely we study their variation in the fibers of abelian schemes $\mA \rightarrow S$. We are able to replace the abelian scheme $\mA$ by its $\ell$-torsion subscheme which is a finite étale cover of $S.$ This comes from the fact that the finite monodromy groups of $A$ can be read on the Galois action on the $\ell$-torsion given $\ell$ is a prime big enough (see Proposition \ref{monodromyltors}). For such a cover we show that the image of Galois in its fibers is locally constant for the $v$-adic topologies and so are the finite monodromy groups in the fibers of abelian schemes. To recover a global object from this situation we thus require some results on weak approximation on the base scheme.

We wish to apply these results to a universal abelian scheme for abelian varieties of a given dimension $g$. A substitute to such a scheme is constructed in the chapter 7 of \cite{mumgit}. The base $H_g$ of this abelian scheme provided by Mumford is a moduli space for principally polarised abelian varieties of dimension $g$ with some rigidification. It is not known whether $H_g$ satisfies weak approximation or not. To remedy this issue we use a result of Ekedahl on Hilbert's irreducibility theorem for finite étale covers which allows us to solve our weak approximation problem up to some uncontrolled finite field extension. This last part explains the presence of the field $L$ in Theorem \ref{maintheo}. As a consequence we also obtain a form of local-global principle for finite monodromy groups.

\section{Finite monodromy groups and $d(A)$}

\subsection{Local situation}

In this section $A$ is an abelian variety over a local field $K_v$ with valuation $v$ of residue characteristic $p$. Let us denote by $K_v^{\mathrm{un}}$ the maximal unramified extension of $K_v$. It follows from \cite{sga} exposé IX section 4.1 and Grothendieck's Galois criterion for semi-stability that for an abelian variety $A$ over $K_v$ there is a smallest extension $(K_v^{\mathrm{un}})_{A,s}$ of $K_v^{\mathrm{un}}$ over which $A$ acquires semi-stable reduction. Its Galois group is $\Phi_{A,v}$ by definition. In particular if $L_v$ is an extension of $K_v$ then $A_{L_v}$ has semi-stable reduction if and only if $(K_v^{\mathrm{un}})_{A,s}\subset L_vK_v^{\mathrm{un}}$. 
Here we first show that we can descend the extension $(K_v^{\mathrm{un}})_{A,s}/K_v^{\mathrm{un}}$ to an extension of the same degree over $K_v$ over which $A$ acquires semi-stable reduction. This is done at the cost of the Galois property of $(K_v^{\mathrm{un}})_{A,s}/K_v^{\mathrm{un}}$.

\begin{lem}\label{carfinimonogrp}
There is an extension $L_v/K_v$ of degree $\Card \Phi_{A,v}$ such that $A_{L_v}$ has semi-stable reduction. Moreover, if $L_v/K_v$ is a finite extension such that $A_{L_v}$ has semi-stable reduction then $\Card \Phi_{A,v}$ divides the ramification index of $L_v/K_v$.
\end{lem}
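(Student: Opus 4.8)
The plan is to deduce both statements from the infinite Galois theory of the extension $(K_v^{\mathrm{un}})_{A,s}/K_v$. The starting point is that, even though the extension $(K_v^{\mathrm{un}})_{A,s}/K_v^{\mathrm{un}}$ is attached to $A$ only after the unramified base change, the field $(K_v^{\mathrm{un}})_{A,s}$ itself is canonically attached to $A$ over $K_v$: for $\sigma\in\Gal(\bar K_v/K_v)$ the field $\sigma\big((K_v^{\mathrm{un}})_{A,s}\big)$ is again an extension of $K_v^{\mathrm{un}}$ over which $A$ acquires semi-stable reduction (semi-stability of the reduction is invariant under conjugation and $\sigma$ stabilizes $K_v^{\mathrm{un}}$ setwise), hence contains, and by applying the same argument to $\sigma^{-1}$ equals, $(K_v^{\mathrm{un}})_{A,s}$. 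Thus $(K_v^{\mathrm{un}})_{A,s}/K_v$ is an infinite Galois extension; denoting its group by $\Gamma$, we obtain a short exact sequence of profinite groups
$$1\longrightarrow \Phi_{A,v}\longrightarrow \Gamma\longrightarrow \Gal(K_v^{\mathrm{un}}/K_v)\longrightarrow 1,$$
and $\Gal(K_v^{\mathrm{un}}/K_v)\cong\widehat{\Z}$, topologically generated by the Frobenius.

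The key step is that this sequence splits. Pick any lift $\gamma\in\Gamma$ of the Frobenius. Since $\widehat{\Z}$ surjects onto every finite cyclic group, the assignment sending a topological generator of $\widehat{\Z}$ to $\gamma$ is compatible in every finite quotient of $\Gamma$ and therefore extends to a continuous homomorphism $s\colon\widehat{\Z}\to\Gamma$. Composing $s$ with the projection $\Gamma\to\widehat{\Z}$ gives a continuous endomorphism of $\widehat{\Z}$ fixing a topological generator, hence the identity, so $s$ is a section. Set $H=s(\widehat{\Z})$, a closed subgroup with $H\cap\Phi_{A,v}=1$ and $H\Phi_{A,v}=\Gamma$; in particular $[\Gamma:H]=\Card\Phi_{A,v}$, and being closed of finite index in the compact group $\Gamma$, the subgroup $H$ is open. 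Let $L_v=\big((K_v^{\mathrm{un}})_{A,s}\big)^H$. By the Galois correspondence $[L_v:K_v]=[\Gamma:H]=\Card\Phi_{A,v}$, while the subfield $L_vK_v^{\mathrm{un}}$ corresponds to $H\cap\Phi_{A,v}=1$, i.e. $L_vK_v^{\mathrm{un}}=(K_v^{\mathrm{un}})_{A,s}$; hence $A_{L_v}$ has semi-stable reduction by the criterion recalled before the lemma. This proves the first assertion.

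For the second assertion, let $L_v/K_v$ be finite with $A_{L_v}$ semi-stable, so that $(K_v^{\mathrm{un}})_{A,s}\subseteq L_vK_v^{\mathrm{un}}$. Writing $K_v^{(0)}$ for the maximal unramified subextension of $L_v/K_v$ and $\pi$ for a uniformizer of $L_v$, the minimal polynomial of $\pi$ over $K_v^{(0)}$ is Eisenstein of degree $e(L_v/K_v)$, hence still Eisenstein, so irreducible, over $K_v^{\mathrm{un}}\supseteq K_v^{(0)}$; therefore $[L_vK_v^{\mathrm{un}}:K_v^{\mathrm{un}}]=[K_v^{\mathrm{un}}(\pi):K_v^{\mathrm{un}}]=e(L_v/K_v)$. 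By the tower law, $\Card\Phi_{A,v}=[(K_v^{\mathrm{un}})_{A,s}:K_v^{\mathrm{un}}]$ divides $[L_vK_v^{\mathrm{un}}:K_v^{\mathrm{un}}]=e(L_v/K_v)$, as desired.

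The only genuine obstacle is the splitting of $1\to\Phi_{A,v}\to\Gamma\to\widehat{\Z}\to 1$. One should keep in mind that an arbitrary (totally ramified) extension of $K_v^{\mathrm{un}}$ need not descend to an extension of $K_v$ of the same degree — already because $K_v$ has a strictly smaller residue field — so it is essential that $(K_v^{\mathrm{un}})_{A,s}$ carries the Galois descent data provided by $A$ being defined over $K_v$; once the sequence for $\Gamma$ is in place, the splitting is a soft fact about profinite groups which are extensions of $\widehat{\Z}$ by a finite group. The remaining ingredients — $\Gal(\bar K_v/K_v)$-stability of $(K_v^{\mathrm{un}})_{A,s}$, closed finite-index subgroups of profinite groups being open, and the identity $[L_vK_v^{\mathrm{un}}:K_v^{\mathrm{un}}]=e(L_v/K_v)$ — are routine.
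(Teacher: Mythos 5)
Your proof is correct and takes essentially the same approach as the paper: both establish the exact sequence $1\to\Phi_{A,v}\to\Gal\bigl((K_v^{\mathrm{un}})_{A,s}/K_v\bigr)\to\widehat{\Z}\to 1$, split it, and take the fixed field of the section's image as $L_v$. The only differences are cosmetic — the paper cites SGA IX 4.1 for the Galois property and the projectivity of $\widehat{\Z}$ for the splitting where you argue these by hand, and it gets the divisibility statement from inertia indices ($e(L_v/K_v)=[I_{K_v}:I_{L_v}]$ and $I_{L_v}\subset I_{A,v}$) rather than your Eisenstein computation of $[L_vK_v^{\mathrm{un}}:K_v^{\mathrm{un}}]$, which amounts to the same fact.
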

\begin{proof}

By section 4.1 of \cite{sga} exposé IX the extension $(K_v^{\mathrm{un}})_{A,s}$ is Galois over $K_v$ and we have an exact sequence
$$\begin{tikzcd}
1 \arrow[r] & \Phi_{A,v} \arrow[r] & \Gal ((K_v^{\mathrm{un}})_{A,s}/K_v) \arrow[r] & \widehat{\Z} \arrow[r] & 1.
\end{tikzcd}$$
As $\widehat{\Z}$ is projective (see proposition 5.2.2 of \cite{Wils}) this sequence admits a splitting $s$. The subgroup $H=s(\widehat{\Z})$ is closed as the continuous image of a compact, verifies $H\cap \Phi_{A,v}=\{1\}$ and is of index $\Card \Phi_{A,v}$. It is therefore an open subgroup and corresponds to an extension $L_v/K_v$ of degree $\Card \Phi_{A,v}$. Moreover we have $L_vK_v^{\mathrm{un}}=(K_v^{\mathrm{un}})_{A,s}$ by construction so that $A_{L_v}$ has semi-stable reduction.

For the second part of the theorem, let $L_v/K_v$ be such that $A_{L_v}$ has semi-stable reduction. Then we have $I_{L_v}\subset I_{A,v}$ so that the ramification index $e(L_v/K_v)=[I_{K_v}:I_{L_v}]$ is divisible by $\Card \Phi_{A,v}$. 
\end{proof}

The following proposition will enable us to recover the finite monodromy groups of the fibers of an abelian scheme in the next section.
\begin{prop} \label{monodromyltors}Let $\ell>\max(2\dim A+1,p)$ be a prime number. The group $\Gal (K_v^{\mathrm{un}}(A[\ell])/K_v^{\mathrm{un}})$ is either $\Phi_{A,v}$ or $\Phi_{A,v}\times \Z/\ell \Z$.
\end{prop}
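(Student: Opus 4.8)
The plan is to realise $G:=\Gal\bigl(K_v^{\mathrm{un}}(A[\ell])/K_v^{\mathrm{un}}\bigr)$ as the image of the inertia group $I_{K_v}=\Gal(\overline{K_v}/K_v^{\mathrm{un}})$ under the mod-$\ell$ representation $\bar\rho\colon I_{K_v}\to \mathrm{GL}(A[\ell])$, and then to compare it with $\Phi_{A,v}=I_{K_v}/I_{A,v}$, where $I_{A,v}=\Gal\bigl(\overline{K_v}/(K_v^{\mathrm{un}})_{A,s}\bigr)$. First I would record that, since $\ell\geq 3$, triviality of the inertia action on $A[\ell]$ forces semi-stable reduction (this is the content of Proposition 4.7 of \cite{sga} exposé IX already invoked in the introduction): concretely $A$ becomes semi-stable over $K_v^{\mathrm{un}}(A[\ell])$, so $(K_v^{\mathrm{un}})_{A,s}\subset K_v^{\mathrm{un}}(A[\ell])$ and therefore $\ker\bar\rho\subset I_{A,v}$. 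This produces a short exact sequence
\[ 1 \longrightarrow \bar\rho(I_{A,v}) \longrightarrow G \longrightarrow \Phi_{A,v} \longrightarrow 1, \]
and the whole statement reduces to identifying the kernel $\bar\rho(I_{A,v})$ and to showing the sequence splits as a direct product.

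For the kernel I would use the monodromy description of the semi-stable action. Over $(K_v^{\mathrm{un}})_{A,s}$ the variety is semi-stable, so $I_{A,v}$ acts on $T_\ell A$ through $\tau\mapsto 1+t_\ell(\tau)N$, where $t_\ell\colon I_{A,v}\to \Z_\ell$ is the $\ell$-adic tame character and $N$ is the monodromy operator, which satisfies $N^2=0$ as $A$ has weight-one cohomology. Reducing modulo $\ell$, the image $\bar\rho(I_{A,v})$ lies in the group $\{1+c\bar N : c\in \F_\ell\}$, which (using $\bar N^2=0$) is cyclic of order $1$ or $\ell$; since a subgroup of a group of prime order is trivial or everything, $\bar\rho(I_{A,v})$ is either trivial or isomorphic to $\Z/\ell\Z$. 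In the first case $G\cong \Phi_{A,v}$ and we are done, so the real work is the second case.

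To handle it I would first note, from the divisibility $\Card\Phi_{A,v}\mid M(2\dim A)$ of Silverberg--Zarhin (quoted in the introduction) together with the fact that every prime dividing $M(2\dim A)$ is at most $2\dim A+1<\ell$, that $\ell\nmid\Card\Phi_{A,v}$. Thus the kernel $\Z/\ell\Z$ is the normal Sylow $\ell$-subgroup of $G$, and it remains to prove that the conjugation action of $G$ on it is trivial: the extension is then central, and a central extension of a finite group of order prime to $\ell$ by $\Z/\ell\Z$ splits, yielding $G\cong \Phi_{A,v}\times \Z/\ell\Z$. The main obstacle is exactly this triviality, which I expect to deduce from the Galois-equivariance of the monodromy operator, namely that $N$ is a morphism of Galois representations $V_\ell A\to V_\ell A(-1)$, so that $\rho_\ell(g)N\rho_\ell(g)^{-1}=\chi_{\mathrm{cyc}}(g)N$ for $g\in I_{K_v}$. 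Because the residue field of $K_v^{\mathrm{un}}$ is separably closed, the cyclotomic character $\chi_{\mathrm{cyc}}$ is trivial on $I_{K_v}$, whence $\rho_\ell(g)$ commutes with $N$; reducing modulo $\ell$, every element of $G$ fixes $\bar N$, hence fixes each $1+c\bar N$, so the conjugation action on the kernel is trivial and the splitting is a direct product, as claimed.
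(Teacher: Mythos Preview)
Your argument is correct, and the overall shape---showing $(K_v^{\mathrm{un}})_{A,s}\subset K_v^{\mathrm{un}}(A[\ell])$ and that the quotient $\Gal(K_v^{\mathrm{un}}(A[\ell])/(K_v^{\mathrm{un}})_{A,s})$ is trivial or $\Z/\ell\Z$---matches the paper. The difference lies in how the direct-product structure is obtained in the nontrivial case. The paper does not touch the monodromy operator or invoke Schur--Zassenhaus; instead it uses the field-theoretic fact that over a henselian field with separably closed residue field there is a \emph{unique} extension of any given degree prime to $p$. Thus $K_v^{\mathrm{un}}(A[\ell])$, being the unique degree-$\ell$ extension of $(K_v^{\mathrm{un}})_{A,s}$, must coincide with the compositum $(K_v^{\mathrm{un}})_{A,s}\cdot K_{v,\ell}^{\mathrm{un}}$, where $K_{v,\ell}^{\mathrm{un}}$ is the unique degree-$\ell$ extension of $K_v^{\mathrm{un}}$; linear disjointness (from $\ell\nmid\Card\Phi_{A,v}$) then gives $\Gal\simeq\Phi_{A,v}\times\Z/\ell\Z$ immediately. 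Your route trades this structural input for the Galois-equivariance of $N$ (as a map $V_\ell A\to V_\ell A(-1)$) together with triviality of $\chi_{\mathrm{cyc}}$ on inertia and an $H^2$-vanishing; it is a bit heavier in prerequisites but stays entirely inside the representation, which could be an advantage in contexts where the ambient extension lattice is less transparent.
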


\begin{proof}
By Proposition 4.7 of \cite{sga} exposé IX we have $(K_v^{\mathrm{un}})_{A,s}\subset K_v^{\mathrm{un}}(A[\ell])$ and by Proposition 3.5 of \textit{loc. cit.} an element $\sigma\in \Gal(\overline{K_v}/(K_v^{\mathrm{un}})_{A,s})$ acts unipotently with order $2$ on $A[\ell]$. That is, if $x\in A[\ell]$ we have $(\sigma-\mathrm{id})^2(x)=0$ and so $\sigma^2(x)=2\sigma(x)-x$. We get $\sigma^{\ell}(x)=\ell\sigma(x)-(\ell-1)x=x$ as $x$ is of $\ell$-torsion and that $\Gal(K_v^{\mathrm{un}}(A[\ell])/(K_v^{\mathrm{un}})_{A,s})$ is of exponent $\ell$. Hence this group is either trivial or $\Z/\ell\Z$. 

In the first case $\Gal(K_v^{\mathrm{un}}(A[\ell])/K_v^{\mathrm{un}})=\Phi_{A,v}$ and we are done. In the second case, the prime divisors of $\Phi_{A,v}$ are smaller or equal to $2\dim A+1$ by the divisibility bound $\Card \Phi_{A,v} \mid M(2g)$ so that by choice of $\ell$ the extension $(K_v^{\mathrm{un}})_{A,s}$ is linearly disjoint of the unique extension $K_{v,\ell}^{\mathrm{un}}$ of $K_v^{\mathrm{un}}$ of order $\ell$. As $K_v^{\mathrm{un}}(A[\ell])$ is the unique extension of order $\ell$ of $(K_v^{\mathrm{un}})_{A,s}$ we get that it is the compositum $(K_v^{\mathrm{un}})_{A,s}K_{v,\ell}^{\mathrm{un}}$ and so the statement on its Galois group follows.
\end{proof}

\subsection{Global situation}

The main theorem of this section will follow from the study of the local situation and the following result of weak approximation for fields.
\begin{prop}\label{weakapproxfield}
Let $K$ be a number field and $v_1,\dots, v_n \in \Sigma_K$. Let $L^{(v_i)}/K_{v_i}$ be finite extensions of the same degree $d$. Then there is an extension $L/K$ of degree $d$ such that there is a unique place $w_i$ over  $v_i$ for each $i\in \{1,\dots, n\}$ and it verifies $L_{w_i}\simeq L^{(v_i)}$. 
\end{prop}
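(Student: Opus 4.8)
The plan is to construct $L$ as a primitive element extension built from local data, using the approximation of the minimal polynomials. First I would pick for each $i$ a primitive element $\alpha_i$ of $L^{(v_i)}/K_{v_i}$, with minimal polynomial $P_i(X) \in K_{v_i}[X]$ of degree $d$; after a small perturbation I may assume each $P_i$ is separable (even if the residue characteristic divides $d$, the set of separable monic degree-$d$ polynomials is open and dense in the $v_i$-adic topology on coefficient space, and being irreducible with prescribed completion is also an open condition by Krasner's lemma). Then by weak approximation for the number field $K$ (simultaneous approximation at the finitely many places $v_1,\dots,v_n$) I can choose a monic polynomial $P(X) \in K[X]$ of degree $d$ whose coefficients are $v_i$-adically as close as desired to those of $P_i$ for every $i$. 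Let $L = K[X]/(P)$; I must check this is a field of degree $d$ with the desired local behaviour.

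The key mechanism is Krasner's lemma together with the continuity of roots. If $P$ is $v_i$-adically close enough to $P_i$, then $P$ factors over $K_{v_i}$ with exactly one irreducible factor that is $v_i$-adically close to $P_i$, and that factor generates a field isomorphic to $L^{(v_i)} = K_{v_i}[X]/(P_i)$. But I want $P$ itself to be irreducible over $K_{v_i}$, i.e. $L \otimes_K K_{v_i}$ to be a field, so that there is a \emph{unique} place $w_i$ above $v_i$ and $L_{w_i} \simeq L^{(v_i)}$. This is where I must be slightly careful: irreducibility of $P_i$ over $K_{v_i}$ is itself a $v_i$-adically open condition on the coefficients (the factorization type of a separable polynomial is locally constant, again by Krasner), so taking $P$ close enough to each $P_i$ forces $P$ to be irreducible over each $K_{v_i}$. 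Since $P$ is then irreducible over $K$ (it is even irreducible over a completion), $L = K[X]/(P)$ is a degree-$d$ field extension of $K$, and the place-decomposition $L \otimes_K K_{v_i} \simeq L_{w_i}$ gives the unique $w_i \mid v_i$ with $L_{w_i} \simeq K_{v_i}[X]/(P) \simeq L^{(v_i)}$.

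The main obstacle is handling the non-separable or ramified case cleanly: when $\operatorname{char} k(v_i) \mid d$ one cannot assume $L^{(v_i)}/K_{v_i}$ is generated by a separable element a priori, and Krasner's lemma needs the target extension to be presented via a separable polynomial for the "nearby factor" argument. I would resolve this by the standard trick of replacing $P_i$ by a nearby separable polynomial generating the \emph{same} extension: over a local field, any finite extension $L^{(v_i)}$ of degree $d$ can be written as $K_{v_i}[X]/(Q_i)$ for some \emph{separable} monic $Q_i$ of degree $d$ — indeed any extension of local fields of given degree is generated by a root of a separable polynomial (e.g. perturb a defining Eisenstein-type polynomial, or use that $L^{(v_i)} = K_{v_i}(\pi, \zeta)$ for a uniformizer $\pi$ and a root of unity $\zeta$ and take an appropriate primitive element, or simply invoke that a finite extension of a complete discretely valued field with perfect residue field — or in general, by Krasner, a sufficiently general element — is a simple separable extension). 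Once every $L^{(v_i)}$ is presented by a separable polynomial, the approximation-plus-Krasner argument above goes through verbatim. The remaining verifications — that distinct factors cannot both be close to $P_i$, that the ramification and residue data transfer, and that the global $P$ is monic of the right degree — are routine continuity statements that I would not spell out in detail.
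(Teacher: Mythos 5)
Your argument is correct and is exactly the classical Krasner-plus-weak-approximation proof that the paper does not spell out but delegates to the literature (Ribenboim, chapter 6, Theorem 4): approximate the minimal polynomials of local primitive elements by a single monic $P\in K[X]$, use Krasner/continuity of factorization to see $P$ stays irreducible over each $K_{v_i}$ with $K_{v_i}[X]/(P)\simeq L^{(v_i)}$, and read off the unique place $w_i$ from $L\otimes_K K_{v_i}$ being a field. The only superfluous part is your worry about inseparability: the places in $\Sigma_K$ are non-archimedean places of a number field, so each $K_{v_i}$ has characteristic zero and every finite extension $L^{(v_i)}/K_{v_i}$ is automatically simple with a separable minimal polynomial, regardless of the residue characteristic dividing $d$.
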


This result is proved similarly to other classical results of the same flavor, see for example chapter 6 of \cite{rib} and specially Theorem 4.

\begin{theo} \label{monoglobdeg}
Let $A$ be an abelian variety over a number field $K$. Then there is an extension $L/K$ of degree $\underset{v\in \Sigma_K}{\lcm} \Card \Phi_{A,v}$ such that $A_L$ has semi-stable reduction. Moreover if $L/K$ is a finite extension such that $A_L$ has semi-stable reduction then
$$ \underset{v\in \Sigma_K}{\lcm} \Card \Phi_{A,v} \mid [L:K].$$
\end{theo}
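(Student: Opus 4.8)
The plan is to combine the local Lemma \ref{carfinimonogrp} with the weak approximation result Proposition \ref{weakapproxfield}, after reducing to a finite set of places. First I would observe that the set of places $v \in \Sigma_K$ for which $\Phi_{A,v} \neq \{1\}$ is finite: indeed $A$ has good reduction at all but finitely many places, and at a place of good reduction $\Phi_{A,v}$ is trivial. Call this finite set $\{v_1,\dots,v_n\}$ (if it is empty, $A$ already has everywhere semi-stable reduction and we take $L=K$), and set $d = \operatorname{lcm}_{i} \Card \Phi_{A,v_i} = \operatorname{lcm}_{v\in\Sigma_K} \Card \Phi_{A,v}$.

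For the existence statement, at each $v_i$ Lemma \ref{carfinimonogrp} gives a local extension $M^{(v_i)}/K_{v_i}$ of degree $\Card \Phi_{A,v_i}$ over which $A$ acquires semi-stable reduction. These degrees are not equal in general, so to feed them into Proposition \ref{weakapproxfield} I would pad each one: let $L^{(v_i)} = M^{(v_i)} \cdot F_i$ where $F_i/K_{v_i}$ is an unramified extension of degree $d/\Card \Phi_{A,v_i}$, so that $[L^{(v_i)}:K_{v_i}] = d$ for every $i$ (the unramified padding does not destroy semi-stable reduction, since semi-stable reduction is insensitive to unramified base change — this is exactly the content of passing to $K_v^{\mathrm{un}}$ in the definition of $\Phi_{A,v}$). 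Proposition \ref{weakapproxfield} then produces a global extension $L/K$ of degree $d$ with a unique place $w_i$ above each $v_i$ and $L_{w_i} \simeq L^{(v_i)}$. At $w_i$, $A_{L_{w_i}}$ has semi-stable reduction by construction; at every other place $w$ of $L$, lying over some $v \notin \{v_1,\dots,v_n\}$, $A_{K_v}$ already had semi-stable reduction and hence so does $A_{L_w}$. By the Néron–Ogg–Shafarevich–type criterion (semi-stable reduction is a local condition that can be checked place by place), $A_L$ has semi-stable reduction everywhere, which gives the first assertion.

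For the divisibility statement, suppose $L/K$ is any finite extension with $A_L$ semi-stable. Fix $v \in \Sigma_K$ and choose a place $w$ of $L$ above $v$. Then $A_{L_w}$ has semi-stable reduction, so by the second part of Lemma \ref{carfinimonogrp}, $\Card \Phi_{A,v}$ divides the ramification index $e(L_w/K_v)$, which in turn divides $[L_w:K_v]$, which divides $[L:K]$. Since this holds for every $v \in \Sigma_K$, the least common multiple $\operatorname{lcm}_{v\in\Sigma_K} \Card \Phi_{A,v}$ divides $[L:K]$. The only mildly delicate point in the whole argument is the unramified padding step: one must make sure that enlarging the local extension by an unramified factor keeps it inside the scope of Proposition \ref{weakapproxfield} (it does, since that proposition allows arbitrary local extensions of the prescribed common degree) and does not spoil semi-stability (it does not, by unramified base-change invariance). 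Everything else is a direct assembly of the two quoted results.
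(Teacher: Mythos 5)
Your overall strategy is the same as the paper's: Lemma \ref{carfinimonogrp} at each of the finitely many bad places, unramified padding to a common degree $d$, then Proposition \ref{weakapproxfield}, and the second part of Lemma \ref{carfinimonogrp} for the divisibility. One small point in the existence step: defining $L^{(v_i)}$ as the compositum $M^{(v_i)}\cdot F_i$ with $F_i/K_{v_i}$ unramified of degree $d/\Card \Phi_{A,v_i}$ does not by itself guarantee $[L^{(v_i)}:K_{v_i}]=d$, since a priori $M^{(v_i)}$ could contain a nontrivial unramified subextension meeting $F_i$ (in fact the extension produced by Lemma \ref{carfinimonogrp} is totally ramified, but neither the lemma nor your argument says so). The paper sidesteps this by taking the unramified extension \emph{of} $M^{(v_i)}$ of degree $d/\Card \Phi_{A,v_i}$, which has exact degree $d$ over $K_{v_i}$ unconditionally and still preserves semi-stability; with that adjustment your existence argument is exactly the paper's.

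The genuine gap is in the divisibility part. You fix a single place $w\mid v$ and argue $\Card \Phi_{A,v}\mid e(L_w/K_v)\mid [L_w:K_v]\mid [L:K]$. The last divisibility is false in general: for a non-Galois extension $L/K$ a local degree $[L_w:K_v]$ need not divide $[L:K]$ (for instance a quintic field in which some prime has two places with local degrees $2$ and $3$). The correct argument, and the one in the paper, uses \emph{all} places above $v$: since $A_L$ is semi-stable, Lemma \ref{carfinimonogrp} gives $\Card \Phi_{A,v}\mid [L_w:K_v]$ for every $w\mid v$, and hence $\Card \Phi_{A,v}$ divides the sum $\sum_{w\mid v}[L_w:K_v]=[L:K]$; since this holds for each $v$, the least common multiple divides $[L:K]$. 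With these two repairs your proof coincides with the paper's.
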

\begin{proof}
Let $d=\underset{v\in \Sigma_K}{\lcm} \Card \Phi_{A,v}$. For $v\in \Sigma_K$ such that $A$ has not semi-stable reduction at $v$ let $d_v$ be such that $d_v\cdot \Card \Phi_{A,v}=d$. By Lemma \ref{carfinimonogrp} there is an extension $L_v$ of $K_v$ of degree $\Card \Phi_{A,v}$ such that $A_{L_v}$ has semi-stable reduction. Let $M_v$ be the unramified extension of $L_v$ of degree $d_v$. Then $M_v/K_v$ is of degree $d$ and we can apply Proposition \ref{weakapproxfield} to the local extensions $M_v/K_v$ for the places $v\in \Sigma_K$ with $\Phi_{A,v}$ non trivial. We get an extension $L/K$ such that $A_L$ has semi-stable reduction by construction.

Now let $L/K$ be an extension such that $A_L$ has semi-stable reduction. Then for every $v\in \Sigma_K$ and every place $w\mid v$ of $L$ by Lemma \ref{carfinimonogrp} we have
$$\Card \Phi_{A,v} \mid [L_w:K_v]$$
so that $d\mid [L:K]$ as the global degree is the sum of the local degrees. 
\end{proof}
\begin{rem}As a consequence of the theorem we get the equality (\ref{lcm=d(A)}) of the introduction $d(A)=\underset{v\in \Sigma_K}{\lcm} \Card \Phi_{A,v}$. 
\end{rem}

With the work of Silverberg and Zarhin we get the divisibility bound $M(2g)$ as a corollary.
\begin{cor}
Let $A$ be an abelian variety of dimension $g$ over a number field. We have the divisibility relations $d(A)\mid M(2g)$ and $d_g\mid M(2g)$. 
\end{cor}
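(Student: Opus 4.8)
The statement should follow immediately by combining Theorem~\ref{monoglobdeg} with the divisibility bound of Silverberg and Zarhin, so the work is essentially bookkeeping. First I would invoke the remark following Theorem~\ref{monoglobdeg}, which records the identity
$$d(A)=\underset{v\in \Sigma_K}{\lcm}\Card \Phi_{A,v}$$
for an abelian variety $A$ of dimension $g$ over a number field $K$. Then, by the main result of \cite{Sb1998}, each finite monodromy group satisfies $\Card \Phi_{A,v}\mid M(2g)$. Since every integer occurring in the least common multiple on the right divides the single fixed integer $M(2g)$, so does their least common multiple; this gives $d(A)\mid M(2g)$.

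For the divisibility $d_g\mid M(2g)$ I would argue as follows. By the previous paragraph the set $\{d(B): B \text{ an abelian variety of dimension }g\text{ over some number field}\}$ is contained in the finite set of positive divisors of $M(2g)$. A supremum of a finite set of positive integers is attained, so $d_g=d(B_0)$ for some such $B_0$, and therefore $d_g\mid M(2g)$.

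The only point requiring a moment's care — and it is a mild one — is this last step: a priori $d_g$ is defined as a supremum, and one needs it to be realized in order to upgrade the plain inequality $d_g\leq M(2g)$ (immediate from $d(B)\leq M(2g)$ for every $B$) to the divisibility $d_g\mid M(2g)$. Finiteness of the divisor set of $M(2g)$ disposes of this, and no further input is needed.
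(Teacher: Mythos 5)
Your proof is correct and follows the same route as the paper: the identity $d(A)=\underset{v\in \Sigma_K}{\lcm}\Card \Phi_{A,v}$ from Theorem~\ref{monoglobdeg} combined with the Silverberg--Zarhin divisibility $\Card \Phi_{A,v}\mid M(2g)$. Your extra remark that $d_g$, being a supremum of divisors of $M(2g)$, is attained and hence itself divides $M(2g)$ is a harmless elaboration of a step the paper leaves implicit.
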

\begin{proof}
By Corollary 6.3 of \cite{Sb1998} we have the divisibility relation $\Card \Phi_{A,v} \mid M(2g)$ for any place $v\in \Sigma_K$. The corollary then follows directly from the equality 
$$d(A)=\underset{v\in \Sigma_K}{\lcm} \Card \Phi_{A,v}$$
given by Theorem \ref{monoglobdeg}.
\end{proof}

\section{Some $p$-adic open coverings of abelian schemes} \label{sectpadiccovering}

\subsection{From finite étale covers of schemes to $p$-adic coverings}

In this section $K$ is a field. The goal here is to give a refined version of Krasner's lemma for finite étale covers as in \cite{po} Proposition 3.5.74 for a local field $L/K$. We first recall the construction and properties of the Galois closure of a finite étale cover of a connected scheme. 

\begin{prop}
Let $S'\rightarrow S$ be a finite étale cover of $K$-schemes with $S$ connected. There is a Galois cover $T\rightarrow S$ such that for every point $s\in S(L)$ with $K\subset L$ and $\overline{s}\in S(\overline{L})$ a geometric point over $s$ the following properties are satisfied.
\begin{itemize}
\item[$(i)$] The set $\Hom_S(T,S')$ is identified with the geometric fiber $S'\times_S \overline{s}$ by any choice of a point in that fiber.
\item[$(ii)$] The action of $\Aut_S T$ on $S'\times_S \overline{s}$ by a choice as in $(i)$ is faithful. 
\item[$(iii)$] The action of $G_L=\mathrm{Gal}(\overline{L}/L)$ on $S'\times_S \overline{s}$ factors through $\Aut_S T$ by any choice as in $(i)$, i.e. it is given by a map
$$\varphi_s \colon G_L \longrightarrow \Aut_S T$$
and different choices of points in the fiber induces conjugated maps.
\end{itemize}
\end{prop}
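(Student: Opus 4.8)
The plan is to take $T$ to be the Galois closure of $S'\to S$ and to read off $(i)$--$(iii)$ from the Galois theory of the étale fundamental group together with the rigidity of finite étale morphisms.

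First I would fix an auxiliary geometric point of $S$, write $\pi$ for the étale fundamental group of $S$ at it, and use that, $S$ being connected, the fibre functor is an equivalence from finite étale $S$-schemes onto finite continuous $\pi$-sets. If $F$ is the $\pi$-set attached to $S'$, with monodromy $\rho\colon\pi\to\mathrm{Sym}(F)$, then $N=\Ker\rho$ is an open normal subgroup, and I would let $T\to S$ be the cover attached to the finite quotient $G:=\pi/N$ acting on itself by translation. Then $T\to S$ is connected (transitive action), Galois with $\Aut_S T\cong G$, and dominates $S'$, since a $\pi$-equivariant map $G\to F$ exists ($N$ acting trivially on $F$). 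Equivalently $T$ is a connected component of the complement of the partial diagonals in the $n$-fold fibre power $S'\times_S\cdots\times_S S'$, with $n$ the degree of $S'\to S$, these diagonals being open and closed because $S'\to S$ is separated and unramified. What matters is that $T$ is one fixed finite étale $S$-scheme, depending on $S'\to S$ alone and not on $L$ or $s$.

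Next, for a fixed triple $(L,s,\bar s)$, I would transport $\pi$ to $\pi_1(S,\bar s)$ and use the equivalence of categories to obtain $\Hom_S(T,S')\cong\Hom_{\pi_1(S,\bar s)}(\mathrm{Fib}_{\bar s}T,\,S'\times_S\bar s)$. Since $\pi_1(S,\bar s)$ acts transitively on $\mathrm{Fib}_{\bar s}T$, evaluation at a chosen base point of that fibre identifies the right-hand side with the set of points of $S'\times_S\bar s$ fixed by the stabilizer of the base point; that stabilizer is $N$ (a priori only a conjugate of it, but $N$ is normal), which by construction acts trivially on $S'\times_S\bar s$, so the identification is with the whole of $S'\times_S\bar s$. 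This is $(i)$, the base point being the choice recorded there — through a fixed dominating morphism $T\to S'$ it becomes a point of $S'\times_S\bar s$. Without the fundamental group one argues instead that rigidity makes evaluation at a geometric point of $T$ injective on $\Hom_S(T,S')$, while the $n$ coordinate projections of the fibre-power model, pairwise distinct on that fibre, make it surjective. For $(ii)$: the $\pi_1(S,\bar s)$-action on $S'\times_S\bar s$ has kernel exactly $N$, so the induced action of $\Aut_S T\cong\pi_1(S,\bar s)/N$ is faithful; equivalently, an automorphism of $T$ fixing every coordinate value on a geometric fibre of the fibre-power model is the trivial permutation.

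Finally, for $(iii)$ I would note that $s$ induces $s_\ast\colon G_L=\pi_1(\Spec L,\bar s)\to\pi_1(S,\bar s)$ and that the $G_L$-action on $S'\times_S\bar s$, being the geometric fibre of the pullback $s^\ast S'\to\Spec L$, equals $\rho\circ s_\ast$; as $\rho$ factors through $\Aut_S T$ by construction, so does $\rho\circ s_\ast$, and the resulting map is $\varphi_s$, the assertion that it factors through $\Aut_S T$ being exactly that $\rho$ kills $N$. Replacing $\bar s$ by another geometric point over $s$, or the base point of $(i)$ by another, conjugates $s_\ast$ (respectively the identification of $\Aut_S T$ with a group of permutations of $S'\times_S\bar s$) by the corresponding element, hence conjugates $\varphi_s$. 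I do not expect a genuine obstacle: the statement repackages standard Galois-category formalism, and the only care needed is that one fixed $T$ built over $S$ serves all pairs $(L,s)$ simultaneously — automatic, since $\Hom_S(T,S')$ and the fibre functors live over the base and $(i)$ merely matches a base-independent set with a base-dependent one — together with the bookkeeping of the two base-point ambiguities so that the conjugacy clause of $(iii)$ comes out exactly.
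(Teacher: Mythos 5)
Your proof is correct, and the cover you build is in fact the same object as the paper's --- the Galois closure, realized as a connected component of the $n$-fold fibre power of $S'$ over $S$ off the partial diagonals --- but your route to properties $(i)$--$(iii)$ is genuinely different. The paper never invokes the étale fundamental group: it fixes one point $s\in S(L)$, takes the component $C$ of $S'\times_S\cdots\times_S S'$ through a geometric point whose coordinates enumerate $S'\times_S\overline{s}$, and proves $(i)$--$(iii)$ by hand, using evaluation maps and rigidity (two $S$-morphisms from a connected cover agreeing at one geometric point coincide) to get the bijection of $(i)$, exhibiting explicit permutation automorphisms of the fibre power to show the deck action is free and transitive, and deducing $(iii)$ from the fact that the $G_L$-action commutes with a free and transitive $\Aut_S C$-action; because $C$ is constructed from a chosen point, the paper must then spend the final third of the proof verifying that a different choice $(t,\overline{t})$ produces an isomorphic cover. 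You instead define $T$ intrinsically from the monodromy representation $\rho\colon \pi_1(S)\to \mathrm{Sym}(F)$ as the cover attached to the regular $\pi_1(S)/\Ker\rho$-set and read off $(i)$--$(iii)$ from the equivalence between finite étale covers of $S$ and finite $\pi_1(S)$-sets together with the functoriality map $s_*\colon G_L\to \pi_1(S,\overline{s})$; this makes the independence of $T$ from $(L,s,\overline{s})$ automatic and reduces $(i)$ and $(ii)$ to identifying stabilizers and kernels (your parenthetical point that the stabilizer is a priori only a conjugate of $\Ker\rho$, harmless by normality, is exactly the needed care), while the conjugacy clause of $(iii)$ comes out of the base-point bookkeeping just as you say. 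The trade-off is the expected one: the paper's argument is elementary and self-contained at the price of the explicit free-and-transitive and base-point-independence verifications, whereas yours leans on the Galois-category formalism and gets those for free; your fallback remarks (rigidity for injectivity of evaluation, the coordinate projections for surjectivity) are precisely the paper's mechanism, so the two proofs agree in substance.
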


\begin{proof}
Let $s\in S(L)$ and $\overline{s}\in S(\overline{L})$ a geometric point over $s$. Let us denote by $a_1,\dots, a_n$ the elements of the set $S'\times_S \overline{s}$ and let $Y=S'\times_S \dots \times_S S'$ the product being taken $n$ times. The geometric point $\overline{a}=(a_1,\dots, a_n)\in Y\times_S \overline{s}$ lies over a point $a\in Y$. Let $C$ be the connected component of $Y$ containing $a$. Let $p_1,\dots, p_n$ be the projections $Y\rightarrow S'$ and $\iota\colon C\hookrightarrow Y$ the inclusion. These maps define $n$ elements $p_1\circ \iota, \dots , p_n\circ \iota \in \Hom_S(C,S')$. Furthermore for $i\in \{1,\dots, n\}$ we have $p_i \circ \iota (\overline{a})=a_i$ so that the evaluation map
$$\begin{array}{cccc}
\operatorname{ev}_{\overline{a}}\colon & \Hom_S(C,S') & \longrightarrow & S'\times_S \overline{s} \\
& f & \longmapsto & f(\overline{a})
\end{array}
$$
is surjective. As two finite étale covers from a connected scheme which coïncide on a geometric point are equal $\operatorname{ev}_{\overline{a}}$ is also injective and thus a bijection.

We now prove that $C$ is a Galois cover of $S$. First as $C$ is connected the action of $\Aut_S C$ on $C\times_S \overline{s}$ is free. We show it is transitive. Let $\overline{a'}\in C\times_S \overline{s}$.  As before the evaluation map
$$\begin{array}{cccc}
\operatorname{ev}_{\overline{a'}}\colon & \Hom_S(C,S') & \longrightarrow & S'\times_S \overline{s} \\
& f & \longmapsto & f(\overline{a'})
\end{array}
$$
is injective hence bijective as the sets have the same cardinality. We also have that $\Hom_S(C,S')=\{p_1\circ \iota, \dots, p_n\circ \iota\}$ so we get $\overline{a'}=(a_{\sigma(1)},\dots, a_{\sigma(n)})$ for some permutation $\sigma\in \mathfrak{S}_n$. Now $\sigma$ defines an automorphism of $Y$ by permuting the factors and $\overline{a'}\in \sigma(C)$ so that $\sigma(C)=C$. The permutation $\sigma$ thus defines an automorphism of $C$ which sends $\overline{a}$ to $\overline{a'}$ and so the action is transitive which proves that $C$ is indeed Galois over $S$. 

We now show that the action of $\Aut_S C$ on $\Hom_S(C,S')$ is faithful which gives $(ii)$. Let $g\in \Aut_S C$ be such that $g$ acts trivially on $\Hom_S(C,S')$. It follows that for all $i\in \{1,\dots,n \}$ we have 
$$p_i\circ \iota (g ( \overline{a}))=a_i$$
so $g(\overline{a})=\overline{a}$ and $g=\mathrm{id}_C$. 

We check property $(iii).$ The action of $G_L$ on $C\times_S \overline{s}$ commutes to that of $\Aut_S C$ so that by the choice of a point in the geometric fiber it is given by a map $\varphi_s\colon G_L\rightarrow \Aut_S C$ as the latter action is free and transitive. Now as the following diagram commutes
$$\begin{tikzcd}
C\times_S \overline{s} \arrow[r, "\sigma"] \arrow[d, "f"]& C\times_S \overline{s} \arrow[d, "f"]\\
S'\times_S \overline{s} \arrow[r, "\sigma"] & S'\times_S \overline{s}.
\end{tikzcd}$$
the Galois action on $S'\times_S \overline{s}$ is deduced from the action on $C\times_S \overline{s}$ which yields $(iii)$.

It remains to see that this construction is independent of the point $s\in S(L)$. Let $t\in S(L)$ and $\overline{t}\in S(\overline{L})$ be another choice of $L$-point and geometric point. The same construction yields a Galois cover $T$ of $S$ with the same properties with regards to $\overline{t}$. As $S$ is connected the cardinality of the geometric fibers of $S'\rightarrow S$ is constant and every connected component of $S'$ surjects onto $S$. Let $S'\times_S \overline{t}=\{b_1,\dots, b_n\}$ and assume without loss of generality that $b_1$ and $a_1$ lie over points in the same connected component of $S'$. Let $\pi_1\in \Hom_S(T,S')$ be such that $\pi_1(\overline{b})=b_1$ for some $\overline{b}\in T(\overline{L})$ and denote $\pi_2,\dots, \pi_n$ the other elements of $\Hom_S(T,S')$. The map $\pi_1$ induces a surjection from $T$ on the connected component of $S'$ containing $b_1$ so we get a geometric point $\overline{c}\in T(\overline{L})$ such that $\pi_1(\overline{c})=a_1$. Now the map
$$\begin{array}{cccc}
\operatorname{ev}_{\overline{c}}\colon & \Hom_S(T,S') & \longrightarrow & S'\times_S \overline{s} \\
& f & \longmapsto & f(\overline{c})
\end{array}
$$
is injective between sets of the same cardinality so it is bijective. Up to renumbering the maps $\pi_1,\dots, \pi_n$ define a map $\eta\colon T\rightarrow Y$ such that $p_i\circ \eta= \pi_i$ and
$$\pi_i\circ \eta(\overline{c})=a_i.$$
We get that $\eta(\overline{c})=\overline{a}$ and $T$ maps to $C$. By the same argument we get a map $C\rightarrow T$ and thus $T\simeq C$. 
\end{proof}

From now on, for a finite étale cover of schemes $S'\rightarrow S$ let $T$ be a Galois cover satisfying the properties of the proposition and $\varphi_s$ be the map given by $(iii)$ for an $L$-point $s$ of $S$. The following lemmas will enable us to check that the open subsets coming from the maps $\pi_H\colon T_H\rightarrow S$ for $H\subset G$ have fibers with constant Galois groups where we denote by $T_H$ the subcover of $T$ associated to $H$.

\begin{lem}\label{galgrofdef}
Let $S'\rightarrow S$ be a finite étale cover of $K$-schemes and $L/K$ be an extension of fields. For every $s\in S(L)$ there is an isomorphism
$$G_L/\Ker \varphi_s \simeq \Gal (L(S'_s(\overline{L}))/L).$$
\end{lem}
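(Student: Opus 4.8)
The plan is to unwind both sides of the claimed isomorphism and identify them via the map $\varphi_s$. First I would recall what the field $L(S'_s(\overline{L}))$ is: by definition the geometric fiber $S'_s(\overline{L}) = S' \times_S \overline{s}$ is the finite set $\{a_1,\dots,a_n\}$ of $\overline{L}$-points, each $a_i$ being defined over some finite extension of $L$ inside $\overline{L}$; the field $L(S'_s(\overline{L}))$ is the compositum of these, equivalently the fixed field of the kernel of the natural action $\rho_s \colon G_L \to \mathfrak{S}_n$ of $G_L$ on $\{a_1,\dots,a_n\}$. Thus $\Gal(L(S'_s(\overline{L}))/L) \simeq G_L/\Ker \rho_s$, and it suffices to prove $\Ker \varphi_s = \Ker \rho_s$.

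The key input is property $(iii)$ of the preceding proposition together with the commuting square displayed in its proof: the action of $G_L$ on $C \times_S \overline{s} = T \times_S \overline{s}$ is given by $\varphi_s$, and the evaluation bijection $\operatorname{ev}_{\overline{a}} \colon \Hom_S(T,S') \xrightarrow{\sim} S' \times_S \overline{s}$ is $G_L$-equivariant for the action on $\Hom_S(T,S')$ induced by precomposition with $\varphi_s(\sigma)$ on the source and $\rho_s$ on the target — this is precisely what the square $f \circ \sigma = \sigma \circ f$ on geometric points encodes. Consequently an element $\sigma \in G_L$ lies in $\Ker\rho_s$ (i.e. fixes every $a_i$) if and only if $\varphi_s(\sigma)$ fixes every element of $\Hom_S(T,S')$, and by the faithfulness statement $(ii)$ the latter holds if and only if $\varphi_s(\sigma) = \mathrm{id}_T$, i.e. $\sigma \in \Ker\varphi_s$. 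This gives $\Ker\varphi_s = \Ker\rho_s$ and hence the desired isomorphism, which is canonical once a point of the fiber is chosen, and well-defined up to conjugation otherwise (matching the last clause of $(iii)$).

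I would then just assemble: $G_L/\Ker\varphi_s = G_L/\Ker\rho_s \simeq \rho_s(G_L) = \Gal(L(S'_s(\overline{L}))/L)$, where the last equality is the Galois correspondence for the finite separable (étale fiber) extension cut out by the $a_i$. The main obstacle, such as it is, is bookkeeping: being careful that the action of $\Aut_S T$ on the fiber used to define $\varphi_s$ in $(iii)$ is the same (via the evaluation bijection of $(i)$) as its action on $\Hom_S(T,S')$ used in $(ii)$, so that the faithfulness in $(ii)$ can be applied to conclude $\varphi_s(\sigma)=\mathrm{id}_T$; once that compatibility is in hand the argument is a short diagram chase. One should also note that $S'_s(\overline{L})$ being a finite set of $\overline{L}$-points defined over separable extensions of $L$ is automatic since $S' \to S$ is étale, so $L(S'_s(\overline{L}))/L$ is a finite Galois extension and the correspondence $G_L/\Ker\rho_s \simeq \Gal(L(S'_s(\overline{L}))/L)$ is legitimate.
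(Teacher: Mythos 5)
Your argument is correct and is essentially the paper's own proof: identify $\Gal(L(S'_s(\overline{L}))/L)$ with $G_L/\Ker\rho_s$, use property $(iii)$ to factor $\rho_s$ through $\varphi_s$, and use the faithfulness of $\Aut_S T$ on $S'\times_S\overline{s}=\Hom_S(T,S')$ from $(ii)$ to get $\Ker\varphi_s=\Ker\rho_s$. Your extra care about the equivariance of the evaluation bijection is just a more explicit spelling-out of the same diagram chase.
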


\begin{proof}
By definition $\Gal(L(S'_s(\overline{L}))/L)$ is the quotient $G_L/\Ker \rho_s$ where 
$$\rho_s \colon G_L\rightarrow \Aut S'\times_S \overline{s}$$
 is the natural Galois action. By property $(iii)$ of $T$ this action factor through $\varphi_s \colon G_L\longrightarrow \Aut_S T$. As $\Aut_S T$ acts faithfully on $S'\times_S \overline{s}=\Hom_S(T,S')$ we have $\Ker \varphi_s=\Ker \rho_s$. 
\end{proof}

\begin{lem}\label{ratpointconjug}
Let $S'\rightarrow S$ be a finite étale cover of $K$-schemes and $L/K$ be an extension of fields. Let $s\in S(L)$ and $H\subset \Aut_S T$ be a subgroup. The fiber $T_H\times_S s$ has an $L$-point if and only if there is a $g\in \Aut_S T$ such that $\Im \varphi_s \subset gHg^{-1}$. 
\end{lem}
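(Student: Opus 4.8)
The plan is to translate the existence of an $L$-point on $T_H\times_S s$ into a fixed-point statement for the Galois action on the associated geometric fibre, and then to read that off from the description of the action through $\varphi_s$.

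First I would observe that $T_H\times_S s$, being the base change along $s\colon \Spec L\rightarrow S$ of the finite étale cover $T_H\rightarrow S$, is a finite étale $\Spec L$-scheme, hence a finite disjoint union of spectra of finite separable extensions of $L$. Therefore it has an $L$-point if and only if one of its connected components is $\Spec L$, equivalently if and only if the finite $G_L$-set $(T_H\times_S s)(\overline{L})=T_H\times_S\overline{s}$ has a point fixed by $G_L$. So it suffices to understand this $G_L$-set.

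Next I would use that, with $G=\Aut_S T$, the fibre $T\times_S\overline{s}$ is a $G$-torsor (because $T\rightarrow S$ is Galois), trivialised by the point used to define $\varphi_s$: this identifies $T\times_S\overline{s}$ with $G$ so that the $\Aut_S T$-action becomes one of the translations and, since the $G_L$-action commutes with it (property $(iii)$), $G_L$ acts through $\varphi_s$ by the other translation. As $T_H$ is the subcover attached to $H$, the map $T\rightarrow T_H$ is an $H$-torsor and $T\times_S\overline{s}\rightarrow T_H\times_S\overline{s}$ is the quotient by the action of $H\subset G$; under the above identification $T_H\times_S\overline{s}$ is thus the coset space of $H$ in $G$, still with $G_L$ acting through $\varphi_s$ by translation on cosets. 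Finally, a coset is $G_L$-fixed precisely when it is fixed by every $\varphi_s(\sigma)$, and unwinding this — using that $\Im\varphi_s$ is a subgroup, hence stable under inverse — says exactly that its representative $g$ satisfies $g^{-1}(\Im\varphi_s)g\subset H$, i.e. (after replacing $g$ by $g^{-1}$) that $\Im\varphi_s\subset gHg^{-1}$ for some $g\in\Aut_S T$; this proves both implications.

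The only delicate point is bookkeeping with left/right conventions: one must fix compatibly on which side $H$ acts in $T\rightarrow T_H$, whether the resulting coset space is $G/H$ or $H\backslash G$, and on which side $\varphi_s$ then translates. None of this affects the final conjugacy condition. The step I expect to require the most care is the identification of $T_H\times_S\overline{s}$ with the coset space equipped with its $\varphi_s$-action; the reduction in the second paragraph is the standard fact that a finite étale scheme over a field has a rational point exactly when its set of geometric points has a Galois-invariant element.
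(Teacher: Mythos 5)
Your proof is correct and follows essentially the same route as the paper: the paper identifies $T_H\times_S \overline{s}$ with $\Hom_S(T,T_H)$, a transitive $\Aut_S T$-set with point stabilizers conjugate to $H$ (i.e.\ a coset space), and then reads off $L$-points as $G_L$-fixed elements via $\varphi_s$, exactly as in your torsor/coset formulation. Your explicit reduction to Galois-fixed geometric points and the left/right bookkeeping remark are just more detailed versions of steps the paper leaves implicit.
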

\begin{proof}
As before we identify $\Hom_S(T,T_H)$ and $T_H\times_S \overline{s}.$ The action of $G_L$ on $T_H\times_S \overline{s}$ factor through $\operatorname{Aut}_S T$ by $\varphi_s$. The stabilizer of the canonical map $p_H\colon T\rightarrow T_H$ by the action of $\Aut_S T$ is $H$ so, as the action is transitive, the stabilizer of any element in $T_H\times_S \overline{s}$ is a conjugate of $H$.   We can now prove the equivalence.

An $L$-point of $T_H\times_S s$ corresponds to an element of $\Hom_S(T,T_H)$ fixed by $G_L$. It follows that $\Im \varphi_s$ is a subgroup of its stabilizer, that is a conjugate of $H$. Conversely, if $\Im \varphi_s$ lie in some conjugate $gHg^{-1}$ of $H$ it fixes $g\cdot p_H\in \Hom_S(T,T_H)$ which yields an $L$-point of the fiber.
\end{proof}

We can now prove the main result of this subsection.

\begin{theo}\label{etanalcover}
Let $S'\rightarrow S$ be a finite étale cover of $K$-schemes of finite type. Let $L/K$ be a local field. Then there are finite groups $H_1,\dots, H_n$ and a finite covering of $S(L)$ by disjoint open sets $(U_i)_{i\in\{1,\dots, n\}}$ such that, for all $i\in \{1,\dots, n\}$ and $s\in S(L)$,
$$s\in U_i\iff \Gal (L(S'_s(\overline{L}))/L)\simeq H_i.$$ 
\end{theo}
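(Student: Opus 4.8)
The plan is to exploit the Galois closure $T \to S$ together with the combinatorial description of the fibers provided by Lemmas~\ref{galgrofdef} and~\ref{ratpointconjug}, and then to upgrade a "one point at a time" statement to an open covering using a Krasner-type argument for finite étale covers. Write $G = \Aut_S T$. For each $L$-point $s$ of $S$, Lemma~\ref{galgrofdef} identifies $\Gal(L(S'_s(\overline{L}))/L)$ with $\Im\varphi_s$ (more precisely with $G_L/\Ker\varphi_s$), and by property $(iii)$ of the Galois closure the subgroup $\Im\varphi_s \subset G$ is well-defined up to conjugacy. So the invariant we must control is the conjugacy class of $\Im\varphi_s$ in $G$. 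There are only finitely many such conjugacy classes since $G$ is finite; let $C_1,\dots,C_n$ be those that actually occur, pick representatives $\Gamma_j \subset G$, and set $H_j \simeq \Gamma_j$ abstractly. Note the $H_j$ as abstract groups need not be distinct, but that is harmless for the statement.

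The key point is that the map $s \mapsto (\text{conjugacy class of }\Im\varphi_s)$ is \emph{locally constant} on $S(L)$ for the $v$-adic topology. For this I would use the characterization via rational points on intermediate covers: by Lemma~\ref{ratpointconjug}, the condition "$\Im\varphi_s$ is contained in a conjugate of $H$" is equivalent to the fiber $T_H \times_S s$ having an $L$-point, and one recovers the exact conjugacy class of $\Im\varphi_s$ by intersecting such conditions over all subgroups $H \subset G$ (it is the intersection of all conjugates $gHg^{-1}$ containing $\Im\varphi_s$; equivalently, $\Im\varphi_s$ lies in $gHg^{-1}$ but in no proper subgroup thereof). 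Thus it suffices to show that for each subgroup $H \subset G$, the set $\{\, s \in S(L) : T_H \times_S s \text{ has an } L\text{-point}\,\}$ is open in $S(L)$. This is exactly a refined Krasner lemma for finite étale covers: if $s_0$ has an $L$-point $t_0$ in the fiber $T_{H,s_0}$, then since $T_H \to S$ is étale, $t_0$ lifts to an $L$-point of $T_H$ over a $v$-adic neighborhood of $s_0$ — concretely one writes $T_H$ étale-locally as $\Spec$ of an algebra $\O_S[x]/(f)$ with $f'$ a unit at $t_0$, and applies Hensel's lemma / the $v$-adic implicit function theorem to deform the root. This is the statement invoked as \cite{po} Proposition~3.5.74 in the text; I would cite it or reproduce the Hensel argument.

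Granting that openness, define $U_j = \{\, s \in S(L) : \Im\varphi_s \text{ is conjugate to } \Gamma_j\,\}$. Each $U_j$ is a finite Boolean combination of the open sets above (a finite intersection of "has an $L$-point" conditions and complements of "has an $L$-point" conditions for the proper subcovers), hence $U_j$ is open; the $U_j$ are pairwise disjoint since the conjugacy class of $\Im\varphi_s$ is a well-defined function of $s$; and they cover $S(L)$ because every $s$ has \emph{some} image conjugacy class, which is one of the $C_j$ by our choice of the list. Finally, for $s \in U_j$ we get $\Gal(L(S'_s(\overline{L}))/L) \simeq G_L/\Ker\varphi_s \simeq \Im\varphi_s \simeq \Gamma_j \simeq H_j$ by Lemma~\ref{galgrofdef}, and conversely if that Galois group is isomorphic to $H_j$... here one must be slightly careful: abstract isomorphism of the Galois group does not by itself pin down the conjugacy class of $\Im\varphi_s$. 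I expect this to be the main subtlety, and I would resolve it by refining the list: discard from $C_1,\dots,C_n$ any redundancy so that the partition into $U_j$ is genuinely indexed by the distinct subgroup-conjugacy-classes occurring, and then observe that the statement as written only asserts the equivalence $s \in U_i \iff \Gal(\cdots) \simeq H_i$, which holds once we know that on the piece $U_i$ the Galois group is \emph{constantly} $\simeq H_i$ and that the $H_i$ are chosen so the partition is consistent — if two conjugacy classes $C_i \neq C_j$ give abstractly isomorphic groups, merge the corresponding open sets. So the real content is: finitely many values, each attained on an open set, which follows from the étale-openness (Krasner) input above.
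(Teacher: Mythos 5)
Your strategy is essentially the paper's: pass to the Galois closure $T$, reduce via Lemmas \ref{galgrofdef} and \ref{ratpointconjug} to controlling the conjugacy class of $\Im\varphi_s$ inside $\Aut_S T$, detect that class through the existence of $L$-points on the intermediate covers $T_H$, and partition $S(L)$ accordingly; your grouping by conjugacy class followed by merging abstractly isomorphic classes is just a mild variant of the paper's direct grouping by isomorphism class, and your handling of the ``abstract isomorphism versus conjugacy'' subtlety is fine.

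There is, however, one genuine gap in the topological step. You prove, via Hensel's lemma and the étale local structure, that each set $\pi_H(T_H(L))=\{s\in S(L) : T_H\times_S s \text{ has an } L\text{-point}\}$ is \emph{open}, and then assert that $U_j$ is open because it is a finite Boolean combination of such sets (intersections of these sets with complements of others). That inference fails: the complement of an open set need not be open, so openness of the $\pi_G(T_G(L))$ alone does not give local constancy of the conjugacy class of $\Im\varphi_s$ --- a priori, points arbitrarily close to $s$ could lie in $\pi_G(T_G(L))$ for a proper subgroup $G\subsetneq \Im\varphi_s$ and have strictly smaller monodromy image. The missing ingredient is that the sets $\pi_G(T_G(L))$ are also \emph{closed} in $S(L)$; this is where finiteness (not just étaleness) of $\pi_G$ enters, since a finite morphism is proper and hence induces a closed (topologically proper) map on $L$-points over a local field. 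This is precisely the paper's remark that the $\pi_G$ are ``open and closed'' for the analytic topology; with it, the sets $\widetilde U_H=\pi_H(T_H(L))\setminus \bigcup_{G\subsetneq H}\pi_G(T_G(L))$ are open (an open set minus a finite union of closed sets), and the rest of your argument goes through unchanged.
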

\begin{proof}
For $H\subset \Aut_S T$ let $\pi_H\colon T_H\rightarrow S$ be the canonical map. Consider the subsets 
$$\widetilde{U}_H=\pi_H(T_H(L))\setminus \bigcup\limits_{G\subsetneq H} \pi_G (T_G(L))$$
of $S(L)$. As the maps $\pi_G$ for $G$ a subgroup of $\Aut_S T$ are finite étale they induce open and closed maps for the analytic topologies on $T_G(L)$ and $S(L)$. Hence the sets $\widetilde{U}_H$ are open. The familly of those open sets for varying $H\subset \Aut_S T$ is a covering by construction. 

Now choose representatives $H_1,\dots, H_n$ of the isomorphism classes of the subgroups of $\Aut_S T$ and put $U_{i}=\bigcup\limits_{\underset{H'\simeq H_i}{H'\subset \Aut_S T}} \widetilde{U}_{H'}$. The sets $(U_{i})_{i\in\{1,\dots,n\}}$ form an open covering of $S(L)$. 

We now prove the equivalence
$$s\in U_{i} \iff \Im \varphi_s \simeq H_i$$
for $s\in S(L)$. By definition of $U_i$, if $s\in U_i$ we have that $s$ is in $\widetilde{U}_H$ for some subgroup $H\subset \Aut_S T$ with $H\simeq H_i$. It follows that the fiber $T_H\times_S s$ has an $L$-point so that by Lemma \ref{ratpointconjug} we have $\Im \varphi_s \subset gHg^{-1}$ for some $g\in \Aut_S T$. Now if we have a containment $\Im \varphi_s\subset gGg^{-1}$ for a subgroup $G\subsetneq H$ then again by Lemma \ref{ratpointconjug} we get that the fiber $T_G\times_S s$ has an $L$-point so that $s\in \pi_G(T_G(L))$ which is impossible by definition of $\widetilde{U}_H$. It follows that $\Im \varphi_s = gHg^{-1} \simeq H_i$. The other direction is proved in a similar way.

We also get from the equivalence that the open sets $(U_i)_{i\in\{1,\dots, n\}}$ are disjoint from each other.

We conclude the proof by Lemma \ref{galgrofdef} which gives
$$\Im\varphi_s\simeq H_i \iff G_L/\Ker \varphi_s \simeq H_i \iff \Gal (L(S'_s(\overline{L}))/L)\simeq H_i.$$

\end{proof}

\subsection{The case of an abelian scheme}

An abelian scheme $A\rightarrow S$ comes with finite étale covers $A[n]\rightarrow S,$ for any integer $n\in \N,$ given by the kernel of the multiplication by $n$ map. Proposition \ref{carfinimonogrp} then allows us to relate the analytic coverings of the base scheme $S$ obtained by Theorem \ref{etanalcover} to the finite monodromy groups of the fibers of $A$. 
\begin{theo}\label{monocover}
Let $A\rightarrow S$ be an abelian scheme with $S$ over a number field $K$ and connected. Let $v\in \Sigma_K$ of residue characteristic $p$. Then there are finite groups $(G_i)_{1\leq i\leq n}$ and a finite covering of $S(K_v)$ by disjoint open sets $(U_i)_{1\leq i\leq n}$ such that for $s\in S(K_v)$ we have
$$s\in  U_i \iff \Phi_{A_s,v}\simeq G_i.$$
In particular if $L/K$ is a finite extension with a place $w\mid v$ such that $L_w=K_v$ then for a point $s\in S(L)$ we have
$$ s_{L_w}\in U_i \iff \Phi_{A_s,w}\simeq G_i.$$
\end{theo}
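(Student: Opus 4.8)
The plan is to apply Theorem~\ref{etanalcover} to the finite étale cover $A[\ell]\to S$ for a prime $\ell$ chosen so that Proposition~\ref{monodromyltors} lets us read the finite monodromy group of every fiber off its $\ell$-torsion. The point requiring care is that Theorem~\ref{etanalcover} controls the Galois group of $K_v(A_s[\ell])/K_v$ only as an abstract group, whereas $\Phi_{A_s,v}$ is governed by the action of inertia over $K_v^{\mathrm{un}}$; the device that bridges this is to run the argument over a sufficiently large \emph{unramified} extension of $K_v$, chosen so that the Galois group produced by the theorem becomes exactly the image of inertia.

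Concretely, let $g$ be the relative dimension of $A\to S$ and fix a prime $\ell>\max(2g+1,p)$; then Proposition~\ref{monodromyltors} applies to every fiber $A_s$ (all of dimension $g$), and $\ell\nmid M(2g)$ since only primes $\leq 2g+1$ divide $M(2g)$, so $\ell\nmid\Card\Phi_{A_s,v}$ for all $s$. As $S$ is of finite type over $K$ and $\ell$ is invertible on $S$, the cover $A[\ell]\to S$ is finite étale. Put $N=\Card\mathrm{GL}_{2g}(\F_\ell)$ and let $K_v^{(N)}/K_v$ be the unramified extension of degree $N$, which is a local field containing $K$. Applying Theorem~\ref{etanalcover} to $A[\ell]\to S$ (with $S$ connected over $K$) and $L=K_v^{(N)}$ yields finite groups $H_1,\dots,H_m$ and a disjoint open covering $(V_j)$ of $S(K_v^{(N)})$ with $s\in V_j\iff\Gal(K_v^{(N)}(A_s[\ell])/K_v^{(N)})\simeq H_j$. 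Since the inclusion $S(K_v)\subset S(K_v^{(N)})$ is continuous for the $v$-adic topologies, the sets $U'_j:=V_j\cap S(K_v)$ form a disjoint open covering of $S(K_v)$.

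Now fix $s\in S(K_v)$. Then $K_v^{(N)}(A_s[\ell])=K_v^{(N)}\cdot K_v(A_s[\ell])$, and the residue degree of $K_v(A_s[\ell])/K_v$ divides $\Card\Gal(K_v(A_s[\ell])/K_v)$, which embeds into $\mathrm{GL}_{2g}(\F_\ell)$ and so divides $N$; hence $K_v(A_s[\ell])\cap K_v^{(N)}$ is the maximal unramified subextension $M_s$ of $K_v(A_s[\ell])/K_v$, giving
$$\Gal(K_v^{(N)}(A_s[\ell])/K_v^{(N)})\simeq\Gal(K_v(A_s[\ell])/M_s)=\Gal(K_v^{\mathrm{un}}(A_s[\ell])/K_v^{\mathrm{un}}),$$
the image of inertia. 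By Proposition~\ref{monodromyltors} this group is $\Phi_{A_s,v}$ or $\Phi_{A_s,v}\times\Z/\ell\Z$, and since $\ell\nmid\Card\Phi_{A_s,v}$, in both cases $\Phi_{A_s,v}$ is recovered from it up to isomorphism (for instance as its largest normal subgroup of order prime to $\ell$). Thus $s\mapsto\Phi_{A_s,v}$ is, up to isomorphism, constant on each $U'_j$; letting $G_1,\dots,G_n$ be the distinct isomorphism types so obtained and taking $U_i$ to be the union of those $U'_j$ realizing $G_i$ gives the desired disjoint open covering of $S(K_v)$. For the last assertion, if $L/K$ is a finite extension with a place $w\mid v$ such that $L_w=K_v$ and $s\in S(L)$, then $(A_s)_{L_w}$ is an abelian variety over $L_w=K_v$ with $\Phi_{A_s,w}=\Phi_{(A_s)_{L_w},v}$, so $s_{L_w}\in U_i\iff\Phi_{A_s,w}\simeq G_i$. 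I expect the only delicate points to be the identification $K_v(A_s[\ell])\cap K_v^{(N)}=M_s$ and the bookkeeping around the harmless factor $\Z/\ell\Z$.
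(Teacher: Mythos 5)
Your proof is correct and follows the same overall strategy as the paper: apply Theorem~\ref{etanalcover} to the finite étale cover $A[\ell]\rightarrow S$ with $\ell>\max(2g+1,p)$, use Proposition~\ref{monodromyltors} to read $\Phi_{A_s,v}$ off the resulting Galois groups, and intersect the covering with $S(K_v)$. The one genuine divergence is the choice of base field for Theorem~\ref{etanalcover}: the paper applies it directly with $L=K_v^{\mathrm{un}}$ (so the groups produced are already the inertia images, and one only strips the possible $\Z/\ell\Z$ factor before intersecting with $S(K_v)$), whereas you apply it over the finite unramified extension $K_v^{(N)}$ with $N=\Card \mathrm{GL}_{2g}(\F_\ell)$ and then identify, for $s\in S(K_v)$, the group $\Gal(K_v^{(N)}(A_s[\ell])/K_v^{(N)})$ with the inertia image via the observation that the maximal unramified subextension of $K_v(A_s[\ell])/K_v$ has degree dividing $N$, hence equals $K_v(A_s[\ell])\cap K_v^{(N)}$. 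Your detour buys something real: $K_v^{\mathrm{un}}$ is not literally a local field (its residue field is infinite), so the paper's invocation of Theorem~\ref{etanalcover} over $K_v^{\mathrm{un}}$ implicitly extends that theorem to Henselian valued fields, while your version uses it only as stated; the cost is the extra bounded-degree argument. A second small improvement on your side is recovering $\Phi_{A_s,v}$ as the largest normal subgroup of order prime to $\ell$ (legitimate since $\ell\nmid M(2g)$ and $\Card\Phi_{A_s,v}\mid M(2g)$), which makes the passage from the isomorphism class $H_j$ to a well-defined $G_i$ transparent, where the paper's prescription ``quotient by the $\Z/\ell\Z$ factor when it occurs'' leaves the well-definedness implicit.
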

\begin{proof}
By Theorem \ref{etanalcover} applied to the finite étale cover of the $\ell$-torsion $A[\ell]\rightarrow S$ with $\ell>\max (2\dim A+1, p)$ and the local field $K_v^{\mathrm{un}}$ there are finite groups $H_1,\dots, H_n$ and a covering $(V_i)_{i\in \{1,\dots, n\}}$ of $S(K_v^{\mathrm{un}})$ such that
$$s\in V_i \iff \Gal (K_v^{\mathrm{un}}(A_s[\ell])/K_v^{\mathrm{un}})\simeq H_i.$$

By Proposition \ref{monodromyltors} we have that $\Gal(K_v^{\mathrm{un}}(A_s[\ell])/K_v^{\mathrm{un}})$ is isomorphic either to $\Phi_{A_s,v}$ or $\Phi_{A_s,v}\times \Z/\ell \Z$. For all $i\in \{1,\dots, n\}$ such that $\Z/\ell\Z$ is a direct factor of $H_i$ let $G_i=H_i/(\Z/\ell \Z)$. For the remaining indices let $G_i=H_i$. Now put
$$U_i^{\mathrm{un}}=\bigcup\limits_{G_j\simeq G_i} V_j$$
and renumber the $U_i^{\mathrm{un}}$ to remove the redundancies so that we have
$$s\in U_i^{\mathrm{un}} \iff \Phi_{A_s,v}\simeq G_i.$$

The injective map $S(K_v)\hookrightarrow S(K_v^{\mathrm{un}})$ is continuous so that we get the desired covering by taking $U_i=U_i^{\mathrm{un}} \cap S(K_v)$.
\end{proof}

As a direct consequence for a base scheme $S$ satisfying weak approximation we get fibers with prescribed finite monodromy groups.
\begin{cor} \label{corweakapprox}
Let $A\rightarrow S$ be an abelian scheme wtih $S$ over a number field $K$ and satisfying weak approximation. Let $v_1,\dots,v_n\in \Sigma_K$ and $s_i\in S(K_{v_i})$ be local points of $S$ for each $i\in \{1,\dots, n\}$. Then there is a point $s\in S(K)$ with
$$\Phi_{A_s,v_i}\simeq \Phi_{A_{s_i},v_i}.$$ 
\end{cor}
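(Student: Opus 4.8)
The plan is to combine Theorem \ref{monocover} with the weak approximation hypothesis on $S$. First, for each index $i\in\{1,\dots,n\}$ I would apply Theorem \ref{monocover} at the place $v_i$ to obtain finite groups $(G^{(i)}_j)_j$ and a finite covering of $S(K_{v_i})$ by disjoint open sets $(U^{(i)}_j)_j$ such that for $t\in S(K_{v_i})$ one has $t\in U^{(i)}_j \iff \Phi_{A_t,v_i}\simeq G^{(i)}_j$. Since these sets cover all of $S(K_{v_i})$, the given local point $s_i$ lies in exactly one of them, say $s_i\in U^{(i)}_{j_i}$; in particular $U^{(i)}_{j_i}$ is a nonempty open neighbourhood of $s_i$ for the $v_i$-adic topology.

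Next I would invoke weak approximation: the diagonal image of $S(K)$ in $\prod_{i=1}^n S(K_{v_i})$ is dense for the product of the $v_i$-adic topologies. Hence the nonempty open set $\prod_{i=1}^n U^{(i)}_{j_i}$ meets this image, so there is a point $s\in S(K)$ whose image in $S(K_{v_i})$ lies in $U^{(i)}_{j_i}$ for every $i$. By the defining property of the covering in Theorem \ref{monocover} this gives $\Phi_{A_s,v_i}\simeq G^{(i)}_{j_i}\simeq \Phi_{A_{s_i},v_i}$ for all $i$, which is exactly the assertion. (Here $s$ is viewed as an element of $S(K_{v_i})$ via $S(K)\hookrightarrow S(K_{v_i})$, and the localisation clause of Theorem \ref{monocover} identifies $\Phi_{A_s,v_i}$ computed over $K$ with the one read off from the covering.)

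The argument is essentially formal once Theorem \ref{monocover} is available, so there is no serious obstacle; the only points that deserve care are (a) checking that the local point $s_i$ lands in one of the open pieces, which is immediate because the $(U^{(i)}_j)_j$ form a covering of the whole of $S(K_{v_i})$, and (b) being precise about what weak approximation means for the (possibly non-proper) scheme $S$, namely density of $S(K)$ in the finite product of the local point sets, which is exactly what we use. If in applications one also wants $s$ to avoid the locus of bad reduction or to meet further open conditions at finitely many extra places, one simply intersects with the corresponding open sets before applying weak approximation.
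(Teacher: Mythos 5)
Your argument is correct and is exactly the intended one: the paper states this corollary without proof as a direct consequence of Theorem \ref{monocover}, and your combination of the open coverings at each $v_i$ with density of $S(K)$ in $\prod_i S(K_{v_i})$ is precisely that deduction. Nothing is missing.
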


We end this section by an example of elliptic scheme over the affine line minus one point.

\begin{ex}
Let us consider the abelian scheme of dimension $1$ over $S=\A^1_{\Q}\setminus \{0\}$ given by the equation
$$E\colon y^2=x^3+t.$$

For any rational point $s\in S(\Q)$ the elliptic curve $E_s$ fiber of $E$ at $s$ has equation
$$y^2=x^3+s$$
hence a discriminant $\Delta_{E_s}=-2^4\cdot 3^3 \cdot s^2$ and has $0$ as $j$-invariant. Furthermore we have $c_4=0$ and $c_6=2^5\cdot 3^3\cdot s$ for the classical invariants $c_4$ and $c_6$. We look at the reduction at $2$ and $3$ of the elliptic curve in the familly $E$. The fact that the $j$-invariant is $0$ gives that these curves have potential good reduction. We are going to construct from the work of Kraus in \cite{kraus} explicit open subsets of the open coverings given by Theorem \ref{monocover} for the $2$-adic and $3$-adic topologies on $S$. We will restrict ourselves to small valuations for $s$ to have a Weierstrass equation that is minimal at $2$ and $3$.

We start by looking at the reduction at $3$. The table page 356 of \cite{kraus} gives the finite monodromy group of $E_s$ at $3$ for $s\in S(\Q)$ depending on the $3$-adic valuations of the invariants of $E_s$ and their reduction $\mod 9$. We treat some cases depending on $v_3(s)$ :
\begin{itemize}[leftmargin=*]
\item $v_3(s)=0$:~ $\Phi_{E_s,3}=\Z/4\Z$ if $s\equiv 1\text{ or } 8\mod 9$ and $\Phi_{E_s,3}=\Z/3\Z\rtimes \Z/4\Z$ otherwise. 
\item $1\leq v_3(s)\leq 4$:~ $\Phi_{E_s,3}=\Z/3\Z\rtimes \Z/4\Z$ unless $v_3(s)=3$. In that last case we have $\Phi_{E_s,3}=\Z/4\Z$ if, by noting $s=3^3u$, we have $u\equiv 1\text{ or } 8 \mod 9$. 
\end{itemize}
We see from these computations that the $3$-adic open subset of $S$ corresponding to the group $\Z/4\Z$ by Theorem \ref{monocover} contains the open balls $1+9\Z_3$, $8+9\Z_3$, $3^3+3^5\Z_3$ and $8\cdot 3^3+3^5\Z_3$. 

For the reduction at $2$ we look at the tables page 358--359 of \cite{kraus}. The equation is minimal for $-2\leq v_2(s)\leq 4$. We treat some of the possibilities. 
\begin{itemize}[leftmargin=*]
\item $v_2(s)=0$:~ $\Phi_{E_s,2}=\Z/3\Z$ if $s\equiv 1\mod 4$ and $\Phi_{E_s,2}=\Z/6\Z$ otherwise.
\item $v_2(s)=1$:~ $\Phi_{E_s,2}=\Z/2\Z$. 
\item $v_2(s)=2$:~ $\Phi_{E_s,2}=\Z/3\Z$ if $\frac{s}{2^2}\equiv -1\mod 4$ and $\Phi_{E_s,2}=\operatorname{SL}_2 (\F_3)$ otherwise.
\end{itemize}
As for $p=3$ these conditions give open balls contained in the open subsets of the covering given by Theorem \ref{monocover}. 

We can thus choose elliptic curves with prescribed finite monodromy at $2$ and $3$ in the list of possibilities. For example the curve
$$E_{4}\colon y^2=x^3+4$$
has maximal finite monodromy at $2$ and $3$, i.e. $\Phi_{E_s,2}=\operatorname{SL}_2 (\F_3)$ and $\Phi_{E_s,3}= \Z/3\Z \rtimes \Z/4\Z$. 
\end{ex}

\section{Existence of abelian varieties with maximal $d(A)$}

We will consider the universal abelian scheme $Z_g\rightarrow H_g$ of principally polarised and linearly rigidified abelian varieties of dimension $g$. The construction of the moduli space $H_g$ is done by D. Mumford in \cite{mumgit} chapter 7. Weak approximation for $H_g$ is not known so that we cannot use Corollary \ref{corweakapprox}. We bypass this difficulty by the following lemma based on a result of T. Ekedahl.
\begin{lem}\label{approxeke}
Let $\varphi\colon X\rightarrow Y$ be a finite étale cover of $K$-schemes of finite type for a number field $K$ with $X$ geometrically irreducible and $Y$ satisfying weak approximation. Let $v_1,\dots, v_n\in \Sigma_K$ and $U_i\subset X(K_{v_i})$ be a nonempty open subset for each $i\in \{1,\dots, n\}$. Then there is a finite extension $L/K$ with places $w_i\mid v_i$ for each $i\in \{1,\dots, n\}$ with $L_{w_i}=K_{v_i}$ and an $x\in X(L)$ such that $x_{L_{w_i}}\in U_i$ for each $i\in \{1,\dots, n\}$.   
\end{lem}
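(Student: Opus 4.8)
The plan is to combine the weak approximation hypothesis on $Y$ with Ekedahl's form of Hilbert's irreducibility theorem so as to produce a single rational point $y\in Y(K)$ whose fibre $X_y$ is at the same time close, at each $v_i$, to the prescribed local situation, and irreducible; the extension $L$ will then be the residue field of $X_y$ and $x$ the tautological $L$-point of $X$ lying over $y$. First I would normalise the local data. For each $i$ choose a point $x_i\in U_i$ and set $y_i=\varphi(x_i)\in Y(K_{v_i})$. Since $\varphi$ is étale, the map $X(K_{v_i})\to Y(K_{v_i})$ is a local homeomorphism for the $v_i$-adic topologies — exactly as already used in the proof of Theorem~\ref{etanalcover} — so there is an open neighbourhood $V_i$ of $x_i$, contained in $U_i$, which $\varphi$ maps homeomorphically onto an open neighbourhood $W_i:=\varphi(V_i)$ of $y_i$ in $Y(K_{v_i})$. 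Replacing $U_i$ by $V_i$ can only strengthen the statement, so it suffices to find $y\in Y(K)$ with $y\in W_i$ for every $i$ and with $X_y$ irreducible.

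Next I would invoke the theorem of Ekedahl for the finite étale cover $\varphi\colon X\to Y$, whose source is geometrically irreducible and whose target satisfies weak approximation. It produces a point $y\in Y(K)$ lying in the prescribed open subset $\prod_{i=1}^n W_i$ of $\prod_{i=1}^n Y(K_{v_i})$ and such that the fibre $X_y$, which is a finite étale $K$-scheme, is irreducible. Hence $X_y=\Spec L$ for a finite separable extension $L/K$ (of degree $\deg\varphi$), and the closed immersion $X_y\hookrightarrow X$ defines a point $x\in X(L)$ with $\varphi(x)=y$, where $y$ is regarded in $Y(K)\subset Y(L)$.

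It then remains to read off the places $w_i$ and to check the local conditions, which is unwinding of definitions. Fix $i$. From the decomposition $L\otimes_K K_{v_i}\simeq\prod_{w\mid v_i}L_w$ one sees that the $K_{v_i}$-points of $X$ lying over $y$ are in bijection with the places $w\mid v_i$ of $L$ satisfying $L_w=K_{v_i}$, the point attached to such a $w$ being the base change $x_{L_w}\in X(K_{v_i})$. On the other hand $y\in W_i$ and $\varphi|_{V_i}\colon V_i\to W_i$ is a bijection, so there is exactly one $K_{v_i}$-point of $X$ over $y$ lying in $V_i\subset U_i$; letting $w_i$ denote the corresponding place we obtain $L_{w_i}=K_{v_i}$ and $x_{L_{w_i}}\in V_i\subset U_i$. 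Doing this for every $i$ yields the extension $L/K$, the places $w_i$ and the point $x$ required by the lemma.

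The only genuine input is Ekedahl's theorem itself, which is quoted here: weak approximation on $Y$ alone would merely place the "good" local lifts of $y$ into possibly distinct irreducible components of a reducible fibre $X_y$, so that no single number field $L$ could serve all the places $v_i$ at once; it is precisely the irreducibility of $X_y$ that makes the number field uniform. The one step demanding care is the bookkeeping of the last paragraph, matching the abstract $L$-point $x$, its completions $x_{L_{w_i}}$, and the distinguished lift of $y$ inside $V_i$ that is singled out by the local homeomorphism $\varphi|_{V_i}$.
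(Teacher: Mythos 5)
Your proposal is correct and follows essentially the same route as the paper: openness of the images of the $U_i$ under the finite étale map, Ekedahl's theorem (Theorem 1.3 of \cite{eke}) applied using weak approximation on $Y$ and geometric irreducibility of $X$ to get $y\in Y(K)$ meeting the local conditions with irreducible (connected) fibre $\Spec L$, and then reading off the places $w_i$ with $L_{w_i}=K_{v_i}$ from the decomposition of $L\otimes_K K_{v_i}$. Your extra shrinking of $U_i$ to a neighbourhood mapped homeomorphically onto its image is just a slightly more explicit way of carrying out the final bookkeeping that the paper does directly.
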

\begin{proof}
As $\varphi$ is a finite étale cover the sets $\varphi(U_i)$ are open in $Y(K_{v_i})$ for each $i\in \{1,\dots, n\}$. By assumption $Y$ satisfies weak approximation and $X$ is geometrically irreducible so that by Theorem 1.3 of \cite{eke} there is a point $y\in Y(K)$ with $y_{K_{v_i}} \in \varphi(U_i)$ for each $i\in \{1,\dots, n\}$ and $y$ has a connected fiber. As $\varphi$ is finite the point $x\in \varphi^{-1}(y)$ is an $L$-point of $X$ with $L/K$ a finite extension. 

Now for a fixed $i\in \{1,\dots, n\}$ the fiber of $y_{K_{v_i}}$ by $\varphi$ is given by $\Spec L\otimes K_{v_i}$. Furthermore, by construction $y_{K_{v_i}}$ is in $\varphi(U_i)$ so that there is a point $x_i\in U_i$ with $\varphi(x_i)=y_{K_{v_i}}$. Such a point corresponds to a place $w_i\mid v_i$ of $L$ with $L_{w_i}=K_{v_i}$ and so we have
$$x_{L_{w_i}}=x_i.$$
The point $x\in X(L)$ has the required properties.
\end{proof}

We can now prove our existence theorem for abelian varieties with maximal degree of semi-stability with respect to a given number field.

\begin{theo}
For any number field $K$ and nonzero integer $g$ there is a finite extension $M/K$ with a principally polarised abelian variety $A$ of dimension $g$ over $M$ such that
$$d(A)=d_g(K).$$
\end{theo}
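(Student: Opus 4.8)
\emph{Strategy and first reduction.} The plan is to realise $d_g(K)$ as $d(A)$ for a fibre $A=Z_{g,x}$ of Mumford's universal abelian scheme $Z_g\to H_g$ (\cite{mumgit}, chapter~7): I prescribe the finite monodromy groups of $A$ at finitely many places, produce $x$ by Ekedahl's theorem, and then remove any spurious bad reduction by a base change. First, since $\Card\Phi_{B,v}\mid M(2g)$ for every abelian variety $B$ of dimension $g$ over a number field and every non-archimedean place $v$ (Corollary 6.3 of \cite{Sb1998}), and since $d(B)=\underset{v}{\lcm}\,\Card\Phi_{B,v}$ by Theorem~\ref{monoglobdeg}, the least common multiple defining $d_g(K)$ is a least common multiple of divisors of the fixed integer $M(2g)$, hence is attained on a finite subfamily. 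So there are principally polarised abelian varieties $B_1,\dots,B_n$ of dimension $g$ over $K$ and non-archimedean places $v_1,\dots,v_n$ of $K$, which we may take pairwise distinct, with $d_g(K)=\underset{1\le j\le n}{\lcm}\,\Card\Phi_{B_j,v_j}$; write $G_j=\Phi_{B_j,v_j}$.

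\emph{Realising the $G_j$ in the universal family and applying Ekedahl.} The abelian variety $B_j\times_K K_{v_j}$, rigidified linearly over $K_{v_j}$ in the sense of \cite{mumgit} (such a rigidification can be chosen over $K_{v_j}$ itself), yields a point $s_j\in H_g(K_{v_j})$ whose fibre is $B_j\times_K K_{v_j}$, so $\Phi_{Z_{g,s_j},v_j}\simeq G_j$. By Theorem~\ref{monocover} for $Z_g\to H_g$ and the place $v_j$ there is a member $W_j\subset H_g(K_{v_j})$ of the associated open covering, with $s_j\in W_j$ and $s\in W_j\iff\Phi_{Z_{g,s},v_j}\simeq G_j$. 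To feed this into Lemma~\ref{approxeke} I manufacture a finite étale cover of a variety satisfying weak approximation: pick an affine open $H\subset H_g$ containing $s_1,\dots,s_n$ (possible since $H_g$ is quasi-projective), which is smooth and geometrically irreducible of some dimension $N$; by Noether normalisation choose a finite morphism $\pi\colon H\to\A^N$, which is étale over a dense open $Y\subset\A^N$, and set $X=\pi^{-1}(Y)$. Then $X\to Y$ is finite étale, $X$ is geometrically irreducible, and $Y$ satisfies weak approximation. As $X(K_{v_j})$ is $v_j$-adically dense in $H_g(K_{v_j})$ we may move each $s_j$ into the nonempty open set $U_j=X(K_{v_j})\cap W_j$. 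Lemma~\ref{approxeke} then provides a finite extension $L/K$, places $\widetilde v_j\mid v_j$ of $L$ with $L_{\widetilde v_j}=K_{v_j}$, and a point $x\in X(L)\subset H_g(L)$ with $x_{L_{\widetilde v_j}}\in U_j$ for all $j$. Put $A=Z_{g,x}$, a principally polarised abelian variety of dimension $g$ over $L$. Since $x_{L_{\widetilde v_j}}\in U_j\subset W_j$, the last assertion of Theorem~\ref{monocover} gives $\Phi_{A,\widetilde v_j}\simeq G_j$, so by Theorem~\ref{monoglobdeg} the integer $d_g(K)=\underset{j}{\lcm}\,\Card G_j$ divides $d(A)$.

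\emph{Cleaning up the degree.} It remains to bring $d(A)$ back down to $d_g(K)$ by one further base change. The abelian variety $A$ has good reduction outside a finite set $T\subset\Sigma_L$. For $w\in T$, Lemma~\ref{carfinimonogrp} gives a finite extension of $L_w$ of degree $\Card\Phi_{A,w}$ over which $A$ acquires semi-stable reduction; let $d$ be the least common multiple of these degrees over the places $w\in T$ other than $\widetilde v_1,\dots,\widetilde v_n$. By Proposition~\ref{weakapproxfield} there is an extension $M/L$ of degree $d$ whose completion at each such $w$ is an extension of $L_w$ of degree $d$ over which $A$ becomes semi-stable (one of the above, followed by an unramified extension), and which is unramified at each $\widetilde v_j$. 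Then the finite monodromy group of $A_M$ is trivial at every place of $M$ except at the place over $\widetilde v_j$, where it is $G_j$, for $j=1,\dots,n$; hence by Theorem~\ref{monoglobdeg}, $d(A_M)=\underset{j}{\lcm}\,\Card G_j=d_g(K)$. Finally $A_M=Z_{g,x_M}$ with $x_M\in H_g(M)$ is a principally polarised abelian variety of dimension $g$ over the finite extension $M/K$, which is the assertion.

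\emph{Main obstacle.} The crux is the use of Lemma~\ref{approxeke}: weak approximation for $H_g$ is not known, so one cannot directly produce a rational point of $H_g$ with the desired local behaviour, and the device of passing to a finite étale cover $X\to Y$ with $Y$ open in affine space — which is exactly what forces the uncontrolled extension $L$, and hence the field $M$ of the statement — is the essential move. A point requiring care in the first step is that the places $v_j$ can be taken pairwise distinct (equivalently, that each prime-power contribution to $d_g(K)$ is realised by a single abelian variety at a single place); otherwise the images in $Y$ of the various $U_j$ lying over a common place need not meet, and Lemma~\ref{approxeke} cannot be invoked with those data. The clean-up of the third paragraph, in which Proposition~\ref{weakapproxfield} removes the accidental bad reduction while leaving the prescribed monodromy groups untouched, is then routine.
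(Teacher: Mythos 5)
Your proposal is correct and follows essentially the same route as the paper: realise the prescribed local monodromy groups in fibres of Mumford's $Z_g\rightarrow H_g$ via the open sets of Theorem~\ref{monocover}, pass by Noether normalisation to a finite étale cover of an open subset of affine space so that Lemma~\ref{approxeke} (Ekedahl) produces the point over an auxiliary extension $L$, and then remove the spurious bad reduction with Lemma~\ref{carfinimonogrp} and Proposition~\ref{weakapproxfield}. The only differences are cosmetic — the paper indexes the pairs $(B_i,v_i)$ by the prime divisors of $d_g(K)$ and uses $K$-rational points $b_i\in H_g(K)$ rather than rigidified local points, and it is no more explicit than you are about the possible coincidence of the places $v_i$, so your flagged ``pairwise distinct'' reduction is on the same footing as the paper's implicit treatment of that point.
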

\begin{proof}
Let $p_1, \dots, p_n$ be the prime divisors of $d_g(K)$. By Theorem \ref{monoglobdeg} there are principally polarised abelian varieties $B_1,\dots, B_n$ of dimension $g$ over $K$ and places $v_1,\dots, v_n$ with
$$v_{p_i} (\Card \Phi_{B_i,v_i})=v_{p_i} (d_g(K)).$$
Let $b_i\in H_g(K)$ be points with fiber $B_i$ for each $i\in \{1,\dots, n\}$. By Theorem \ref{monocover} applied to $Z_g\rightarrow H_g$ there is an open subset $U_i$ of $H_g(K_{v_i})$ with $(b_i)_{K_{v_i}}\in U_i$ and such that
$$s\in U_i \iff \Phi_{A_s,v_i}\simeq \Phi_{B_i,v_i}$$
where we denote by $A_s$ the abelian variety $(Z_g)_s$ given by a point $s\in H_g$. As $H_g$ is quasi-projective and geometrically irreducible there is a geometrically irreducible affine open subset $U\subset H_g$ with $b_1,\dots, b_n\in U$. 

Let $\varphi\colon U\rightarrow \A^k_K$ be the map given by Noether's normalization lemma. As $\varphi$ is generically étale there is a nonempty affine open subset $V\subset U$ and an open subset $Y\subset \A^k_K$ such that
$$\varphi_{|V}\colon V\longrightarrow Y$$
is finite étale. Let $F=U\setminus V$. The subsets $U_i\cap V(K_{v_i})=U_i\cap U(K_{v_i})\setminus F(K_{v_i})$ are open and nonempty. Indeed we have $b_i\in U_i\cap U(K_{v_i})$ by construction so that $U_i\cap U(K_{v_i})$ is nonempty open and $F$ is of positive codimension hence $F(K_{v_i})$ is of empty interior. Now Lemma \ref{approxeke} provides a point $s\in V(L)$ for a finite extension $L/K$ and places $w_i\in \Sigma_L$ with $L_{w_i}=K_{v_i}$ and $s_{L_{w_i}}\in U_i$ for each $i\in \{1,\dots n\}$.
The abelian variety $A_s$ given by $s\in H_g(L)$ verifies
$$\Phi_{A_s,w_i}=\Phi_{B_i,v_i}$$
by Theorem \ref{monocover}. It follows by Theorem \ref{monoglobdeg} that $d_g(K)\mid d(A_s)$.

To get equality, for any place $w\in \Sigma_L\setminus \{ w_1,\dots, w_n\}$ of bad reduction for $A_s$ consider the extension $M_w$ of $L_w$ of degree $\Card \Phi_{A_s,w}$ given by Lemma \ref{carfinimonogrp}. Let $d$ be the lowest common multiple of the degrees of the extensions $M_w$ obtained in this way and replace $M_w$ by its unramified extension of degree $d/\Card \Phi_{A,v}$ so that the extensions $M_w/L_w$ have all degree $d$ and are such that $(A_s)_{L_w}$ is semi-stable. Let $M_{w_i}/L_{w_i}$ be the unique unramified extension of degree $d$ for all $i\in \{1,\dots, n\}$ and $M$ be the field extension of $L$ given by Proposition \ref{weakapproxfield} applied to the local extensions $M_w$ for all $w\in \Sigma_L$ of bad reduction. We have
$$d((A_s)_{M})= \underset{w\in \Sigma_M}{\lcm} \Card \Phi_{A,w}$$
which by construction is $d_g(K)$ as $(A_s)_{M}$ has semi-stable reduction for all places of $\Sigma_M$ not above one of the $w_i$ and there is a unique place $W_i$ above each $w_i$ such that
$$\Phi_{(A_s)_{M},W_i}= \Phi_{B_i,v_i}.$$
\end{proof}

As stated in the introduction, from the proof of the theorem we get the following local-global result for finite monodromy groups.

\begin{theo}
Let $K$ be a number field and $g$ a nonzero integer. Let $G_1,\dots, G_n$ be finite groups such that there are places $v_1,\dots, v_n \in \Sigma_K$ and principally polarised abelian varieties $A_i$ of dimension $g$ over $K_{v_i}$ for all $1\leq i\leq n,$ such that
$$\Phi_{A_i,v_i}=G_i.$$
Then there is a finite extension $L$ of $K$ and places $w_1,\dots ,w_n\in \Sigma_L$ and a principally polarised abelian variety $A$ of dimension $g$ over $L$  such that for all $1\leq i\leq n$ we have
$$\Phi_{A, w_i}=G_i.$$
\end{theo}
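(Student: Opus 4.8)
\section*{Proof plan for the final theorem}

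The plan is to follow the construction carried out in the proof of the preceding theorem, but starting directly from the local abelian varieties $A_i$ rather than from abelian varieties over $K$, and omitting the final degree bookkeeping which is not needed here. First, by the moduli interpretation of $H_g$ --- exactly as in the preceding proof, where a principally polarised $B_i$ over $K$ produced a point $b_i\in H_g(K)$ --- a linear rigidification over $K_{v_i}$ of the principally polarised abelian variety $A_i$ yields a point $a_i\in H_g(K_{v_i})$ whose fibre $(Z_g)_{a_i}$ is $A_i$; such a rigidification exists over $K_{v_i}$ itself, so that $a_i$ is a genuine $K_{v_i}$-point and $\Phi_{(Z_g)_{a_i},v_i}=\Phi_{A_i,v_i}=G_i$. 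Applying Theorem \ref{monocover} to $Z_g\rightarrow H_g$ at the place $v_i$ then gives a nonempty open subset $U_i\subset H_g(K_{v_i})$ containing $a_i$ such that $s\in U_i$ implies $\Phi_{(Z_g)_s,v_i}\simeq G_i$.

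Next I would run the Noether-normalisation step of the preceding proof: choose a geometrically irreducible affine open $U\subset H_g$, a map $\varphi\colon U\rightarrow \A^k_K$ which becomes finite étale onto a nonempty open $Y\subset \A^k_K$ after restriction to a suitable nonempty open $V\subset U$, and note that $Y$ satisfies weak approximation. The one place where this argument differs from the previous one is that the points $a_i$ are only local, so they need not lie in $V(K_{v_i})$. This causes no trouble: $H_g$ is irreducible, so $H_g\setminus V$ has positive codimension and $(H_g\setminus V)(K_{v_i})$ has empty interior for the $v_i$-adic topology; hence $U_i\cap V(K_{v_i})$ is a nonempty open subset of $V(K_{v_i})$ for each $i$.

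Finally I would apply Lemma \ref{approxeke} to the finite étale cover $\varphi_{|V}\colon V\rightarrow Y$ together with the nonempty open subsets $U_i\cap V(K_{v_i})$. This produces a finite extension $L/K$, places $w_i\mid v_i$ with $L_{w_i}=K_{v_i}$, and a point $s\in V(L)\subset H_g(L)$ with $s_{L_{w_i}}\in U_i$ for every $i$. Setting $A=(Z_g)_s$, a principally polarised abelian variety of dimension $g$ over $L$, the last clause of Theorem \ref{monocover} (applicable since $L_{w_i}=K_{v_i}$) yields $\Phi_{A,w_i}\simeq G_i$ for each $i$, which is the assertion. I expect the only genuine point to check is the one already flagged: that each $a_i$ can be taken over $K_{v_i}$ without passing to a ramified extension --- which would alter the monodromy group --- and that restricting to the Noether-normalisation locus $V$ preserves openness and non-emptiness of the $U_i$; the remainder is a transcription of the preceding proof.
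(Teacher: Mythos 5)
Your proposal is correct and is essentially the paper's own argument: the paper derives this theorem directly from the proof of Theorem \ref{maintheo}, replacing the global points $b_i$ by the local points furnished by the $A_i$, which is exactly what you do. Your extra care about the $a_i$ possibly lying outside $V(K_{v_i})$ is handled by the same positive-codimension/empty-interior argument the paper already uses for $F(K_{v_i})$, so nothing new is needed.
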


We now prove Corollary \ref{maincor}.

\begin{proof}
Applying Theorem \ref{maintheo} to the field $K$ obtained by Theorem 1.1 of \cite{Ph2} gives an abelian variety $A$ over a number field $L$ with $\frac{M(2g)}{2^{g-1}}= d(A)$. 
The inequality $d_g\leq M(2g)$ follows from equality (\ref{lcm=d(A)}) and Corollary 6.3 of \cite{Sb1998}.
\end{proof}

\end{document}